\newcommand{\comment}[1]{}
\newcommand{\bR}{{\mathbb R}}
\newcommand{\bZ}{{\mathbb Z}}
\newcommand{\R}{{\mathbb R}}
\def\C{{\mathcal C}}
\def\H{{\mathcal H}}
\def\F{{\mathcal F}}
\def\S{{\mathcal S}}
\def\BMO{B\! M\! O}
\def\div{{\mbox{\small\rm  div}}\,}
\def\curl{{\mbox{\small\rm  curl}}\,}
\def\dist {{\mbox{\small\rm  dist}}\,}
\newcounter{rea}
\newcounter{rek}
\newcounter{res}
\begin{document}

\title[Paraproducts and Products through wavelets]{Paraproducts and Products of functions in $BMO(\mathbb R^n)$ and $H^1(\mathbb R^n)$ through wavelets}         % Enter your title between curly braces
\author{ Aline {\bf BONAMI}}
\address{MAPMO-UMR 6628,
D\'epartement de Math\'ematiques, Universit\'e d'Orleans, 45067
Orl\'eans Cedex 2, France} 
\email{{\tt
Aline.Bonami@univ-orleans.fr}}

\author{Sandrine {\bf GRELLIER }}
\address{MAPMO-UMR 6628,
D\'epartement de Math\'ematiques, Universit\'e d'Orleans, 45067
Orl\'eans Cedex 2, France} 
\email{{\tt
Sandrine.Grellier@univ-orleans.fr}}
\author{Luong Dang {\bf KY}}    
\address{MAPMO-UMR 6628,
D\'epartement de Math\'ematiques, Universit\'e d'Orleans, 45067
Orl\'eans Cedex 2, France} 
\email{{\tt dangky@math.cnrs.fr}}
\keywords{Hardy-Orlicz spaces, Musielak-Orlicz spaces, paraproducts, renormalization product, BMO-multipliers, div-curl Lemma, wavelet decomposition}

\begin{abstract} In this paper, we prove that the  product (in the
distribution sense) of two functions, which are respectively in $
\BMO(\bR^n)$ and $\H^1(\bR^n)$,
may be written as the sum of two continuous bilinear operators, one
from $\H^1(\bR^n)\times \BMO(\bR^n) $ into $L^1(\bR^n)$, the other one
from $\H^1(\bR^n)\times \BMO(\bR^n) $ into a new kind of Hardy-Orlicz
space denoted by $\H^{\log}(\bR^n)$. More precisely, the space
$\H^{\log}(\bR^n)$ is the set of distributions $f$ whose grand maximal
function $\mathcal Mf$ satisfies
$$\int_{\mathbb R^n} \frac {|\mathcal M f(x)|}{ \log(e+|x|) +\log (e+
|\mathcal Mf(x)|)}dx <\infty.$$
The two bilinear operators can be defined in terms of paraproducts. As
a consequence, we find an endpoint estimate involving the space
$\H^{\log}(\bR^n)$ for the $\div$-$\curl$ lemma.
\end{abstract}
\maketitle
\newtheorem{theorem}{Theorem}[section]
\newtheorem{lemma}{Lemma}[section]
\newtheorem{proposition}{Proposition}[section]
\newtheorem{remark}{Remark}[section]
\newtheorem{corollary}{Corollary}[section]
\newtheorem{definition}{Definition}[section]
\newtheorem{example}{Example}[section]
\numberwithin{equation}{section}
\newtheorem{Theorem}{Theorem}[section]
\newtheorem{Lemma}{Lemma}[section]
\newtheorem{Proposition}{Proposition}[section]
\newtheorem{Remark}{Remark}[section]
\newtheorem{Corollary}{Corollary}[section]
\newtheorem{Definition}{Definition}[section]
\newtheorem{Example}{Example}[section]

\section{Introduction}

Products of functions in $\H^1$ and $\BMO$ have been considered by Bonami, Iwaniec, Jones and Zinsmeister in
\cite{BIJZ}. Such products make sense as distributions, and can be written as the sum of an integrable function and a function in a weighted Hardy-Orlicz space. To be more precise, for $f\in\H^1(\bR^n)$ and $g\in \BMO(\bR^n)$, we define the product (in the distribution
 sense)
$f g$ as the distribution whose action on the Schwartz function
$\varphi\in\mathcal S(\bR^{n})$ is given by
\begin{equation}
\left\langle f g,\varphi\right\rangle:=\left\langle \varphi
g,f\right\rangle,
\end{equation}
where the second bracket stands for the duality bracket between
$\H^1(\bR^{n})$ and its dual $\BMO(\bR^{n})$. It is then proven in
\cite{BIJZ} that
\begin{equation}\label{nocancel}
f g\in L^{1}(\bR^{n})+\H^{\Phi}_\omega(\bR^{n}).
\end{equation}
Here $\mathcal H^\Phi_\omega(\bR^n)$ is the weighted Hardy-Orlicz space related to the
Orlicz function
\begin{equation}\label{orl-log}
\Phi(t):=\frac t{\log (e+t)}
\end{equation}
and with weight $\omega(x):=(\log (e+|x|))^{-1}$.

Our aim is to improve this result in many directions. The first one consists in proving that the space $\H^{\Phi}_\omega(\bR^{n})$ can be replaced by a smaller space. More precisely, we define the Musielak-Orlicz space $L^{\log}(\bR^{n})$ as the space of measurable functions $f$ such that
$$\int_{\mathbb R^n} \frac {|f(x)|}{ \log(e+|x|) +\log (e+ |f(x)|)}dx <\infty.$$
The space $\H^{\log}(\bR^n)$ is then defined, as usual, as the space of tempered distributions for which the grand maximal function  is in $L^{\log}(\bR^{n})$. This is a particular case of a Hardy space of Musielak-Orlicz type, with a variable (in $x$) Orlicz function that is also called a Musielak-Orlicz function (see \cite{Ky1}). This kind of space had not yet been considered. A systematic study of Hardy spaces of Musielak-Orlicz type has been done separately by the last author \cite{Ky1}. It generalizes the work of Janson \cite{Ja} on Hardy-Orlicz spaces. In particular, it is proven there that the dual of the space $\H^{\log}(\bR^n)$ is the  generalized $\BMO$ space  that has been introduced by Nakai and Yabuta (see \cite {NY}) to characterize  multipliers of $\BMO(\bR^n)$. Remark that by duality with our result, functions $f$ that are bounded and in the dual of $\H^{\log}(\bR^n)$ are multipliers of $\BMO(\bR^n)$. By the theorem of Nakai and Yabuta there are no other multipliers, which, in some sense, indicates that $\H^{\log}(\bR^n)$ could not be replaced by a smaller space.

Secondly we  answer   a question of \cite{BIJZ} by proving that there exists continuous bilinear operators that allow to split the product into an $L^1(\bR^n)$ part and a part in this Hardy Orlicz space $\H^{\log}(\bR^n)$. More precisely we have the following.

 \begin{theorem} \label{main}There exists two continuous bilinear operators on the product space $\H^1(\bR^n)\times \BMO(\bR^n)$, respectively   $S:\H^1(\bR^n)\times \BMO(\bR^n) \mapsto L^{1}(\bR^{n})$ and $T:\H^1(\bR^n)\times \BMO(\bR^n) \mapsto \H^{\log}(\bR^{n})$
 such that
 \begin{equation}
 fg=S(f,g)+T(f,g).
 \end{equation}
 \end{theorem}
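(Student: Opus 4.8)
The plan is to realise both operators as sums of wavelet paraproducts and to read off their mapping properties from the wavelet characterisations of $\H^1(\bR^n)$ and $\BMO(\bR^n)$. First I would fix a compactly supported wavelet basis $\{\psi_I\}$ of class $C^N$ with $N$ vanishing moments ($N$ large), together with the associated scaling functions, and write $\mathcal P_{\phi^k}$ for the projection onto the scale-$k$ approximation space and $\mathcal Q^k=\mathcal P_{\phi^{k+1}}-\mathcal P_{\phi^k}$. Recall the wavelet characterisations: $f\in\H^1$ is equivalent to $S_w f:=\bigl(\sum_I|\la f,\psi_I\ra|^2|I|^{-1}\mathbf 1_I\bigr)^{1/2}\in L^1$, with $\|f\|_{\H^1}\approx\|S_w f\|_{L^1}$, while $g\in\BMO$ is equivalent to the Carleson condition $\sup_Q|Q|^{-1}\sum_{I\ss Q}|\la g,\psi_I\ra|^2<\infty$. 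From the algebraic identity
\[
\mathcal P_{\phi^{k+1}}f\,\mathcal P_{\phi^{k+1}}g-\mathcal P_{\phi^{k}}f\,\mathcal P_{\phi^{k}}g=(\mathcal Q^kf)(\mathcal P_{\phi^k}g)+(\mathcal Q^kf)(\mathcal Q^kg)+(\mathcal P_{\phi^k}f)(\mathcal Q^kg),
\]
summation over $k\in\bZ$ telescopes to $fg=\Pi_1(f,g)+\Pi_3(f,g)+\Pi_2(f,g)$, where $\Pi_1(f,g)=\sum_k(\mathcal Q^kf)(\mathcal P_{\phi^k}g)$ pairs the wavelet part of $f$ with the local averages of $g$, $\Pi_2(f,g)=\sum_k(\mathcal P_{\phi^k}f)(\mathcal Q^kg)$ does the reverse, and $\Pi_3(f,g)=\sum_k(\mathcal Q^kf)(\mathcal Q^kg)$ is the resonant term. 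Each $\Pi_i$ is manifestly bilinear in $(f,g)$.

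Next I would dispatch the two harmless paraproducts, which together form $S:=\Pi_2+\Pi_3$. The resonant term $\Pi_3$ expands as $\sum\la f,\psi_I\ra\la g,\psi_J\ra\psi_I\psi_J$ over neighbouring pairs of wavelets of the same scale; since $\int|\psi_I\psi_J|\lesssim 1$ for such pairs, $\|\Pi_3(f,g)\|_{L^1}$ reduces to a sum of the form $\sum_I\sum_{J}|\la f,\psi_I\ra|\,|\la g,\psi_J\ra|$, which is controlled by $\|f\|_{\H^1}\|g\|_{\BMO}$ via the tent-space (Carleson) form of $\H^1$–$\BMO$ duality. The paraproduct $\Pi_2$ has output $\sum_I\la g,\psi_I\ra\,\la f\ra_I\,\psi_I$, a wavelet series whose coefficients involve only the harmless averages $|\la f\ra_I|\le Mf$; it is a Calderón–Zygmund paraproduct and maps $\H^1\times\BMO$ into $L^1$ (in fact into $\H^1$), which I would verify by testing on $\H^1$-atoms and checking the resulting molecular estimates, the needed cancellation being built into the $\psi_I$. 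This gives the continuous operator $S:\H^1\times\BMO\to L^1$.

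The whole difficulty is concentrated in $\Pi_1$, which I would take as $T$. Its building blocks are the products $\la f,\psi_I\ra\,\la g\ra_I\,\psi_I$, a fixed multiple $\la g\ra_I$ of a single wavelet, and these multipliers carry the logarithmic growth of $\BMO$-means in two independent directions: spatially, $|\la g\ra_I|\lesssim\|g\|_{\BMO}\log(e+|x_I|)$, and in scale, $|\la g\ra_I|\lesssim\|g\|_{\BMO}\log(e+1/\ell(I))$. Neither factor is integrable against the $\H^1$ data, which is exactly why $L^1$ must be enlarged. The key observation, and the reason the Musielak--Orlicz weight is the right one, is that the two logarithms are absorbed by the two terms of the denominator separately: the spatial growth is matched by $\log(e+|x|)$, while the scale growth $\log(e+1/\ell(I))$ is matched by $\log(e+|\mathcal M(\cdot)|)$, since a wavelet living at scale $\ell(I)$ has height of order $|I|^{-1/2}$, so that the size of the maximal function already encodes the depth. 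Quantitatively I would dominate the grand maximal function of the wavelet series $\Pi_1(f,g)$ through its coefficients and verify
\[
\int_{\bR^n}\frac{\mathcal M(\Pi_1(f,g))(x)}{\log(e+|x|)+\log\bigl(e+\mathcal M(\Pi_1(f,g))(x)\bigr)}\,dx\lesssim \|g\|_{\BMO}\,\|f\|_{\H^1},
\]
which yields both the membership of $T(f,g)=\Pi_1(f,g)$ in $\H^{\log}(\bR^n)$ and the continuity bound. I expect this matching of the two logarithmic growths of $\BMO$-means against the two parts of the Musielak--Orlicz weight, carried out uniformly over scales and positions, to be the main obstacle, together with justifying that the telescoping series converges in $\S'(\bR^n)$ so that $S(f,g)+T(f,g)$ indeed coincides with the product $fg$ of the Introduction.
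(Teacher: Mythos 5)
Your decomposition is exactly the paper's (with the labels of the two paraproducts interchanged), and your treatment of the two easy pieces is essentially what the paper does: the resonant term is handled by same-scale coefficient duality (the Frazier--Jawerth lemma the paper quotes in its Section 6), and the paraproduct carrying the averages of $f$ is shown to map into $\H^1(\bR^n)$ by testing on atoms and using molecular estimates (the paper's Lemma \ref{pi1L2}); that you put this second piece into $S$ rather than into $T$ is immaterial, since $\H^1\subset L^1$. The genuine gap is in the term you call $T$, the paraproduct carrying the averages $\la g\ra_I$. Your plan --- bound $\mathcal M(T(f,g))$ through its wavelet coefficients and match the spatial logarithm of $\la g\ra_I$ against $\log(e+|x|)$ and the scale logarithm against $\log(e+\mathcal M(\cdot))$ --- is a sound heuristic, and it can indeed be pushed through when $f$ is a \emph{single} atom. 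But it does not give the theorem: the Luxemburg functional of $L^{\log}$ is only quasi-subadditive, with no countable subadditivity, so termwise or atomwise bounds of the form $\int\theta(x,\cdot)\,dx\lesssim 1$ cannot be summed over an atomic decomposition $f=\sum_\ell\mu_\ell a_\ell$; iterating the quasi-triangle inequality over $L$ atoms loses an unbounded constant. A global mechanism is required, and this is precisely the content of the paper's proof that your sketch omits: one writes $\Pi_2(a_\ell,g)=h^{(1)}_\ell+\kappa\, g_{R_\ell}h^{(2)}_\ell$ with $\|h^{(1)}_\ell\|_{\H^1}\le C\|g\|_{\BMO}$ and $h^{(2)}_\ell$ an atom (Lemma \ref{writing}), sums the $h^{(1)}_\ell$ in the genuine norm of $\H^1$, and for the remaining sum uses the pointwise splitting $g_{R_\ell}=g+(g_{R_\ell}-g)$, which factors the $\BMO$ function \emph{out} of the sum. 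The generalized H\"older inequality $\|u g\|_{L^{\log}}\le C\|u\|_{L^1}\|g\|_{\BMO^+}$ (Proposition \ref{product_sharp}) is then applied a single time, to $u=\sum_\ell|\mu_\ell|\mathcal M(h^{(2)}_\ell)\in L^1$, while the terms $|g-g_{R_\ell}|\mathcal M(h^{(2)}_\ell)$ are summable in $L^1$ thanks to the classical bound $\int|g-g_R|\mathcal M(a)\,dx\le C\|g\|_{\BMO}$. Without this device (or an equivalent one), ``I would verify the integral inequality'' is the whole theorem, not a proof of it.

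Two further points in your final display are concretely wrong, though fixable. First, the right-hand side cannot be $\|f\|_{\H^1}\|g\|_{\BMO}$: for $g\equiv c$ a constant one has $\|g\|_{\BMO}=0$ while $T(f,g)=cf\neq 0$; the means $\la g\ra_I$ are not controlled by the $\BMO$ seminorm, and the correct statement (the paper's Theorem \ref{pi2}) uses $\|g\|_{\BMO^+}=|g_{\mathbb Q}|+\|g\|_{\BMO}$. For the same reason your coefficient bounds $|\la g\ra_I|\lesssim\|g\|_{\BMO}\log(e+|x_I|)$, etc., need $\|g\|_{\BMO^+}$. Second, since $\theta(x,t)$ is not homogeneous in $t$, an inequality for $\int\theta(x,\mathcal M(T(f,g)))\,dx$ that is linear in $\|f\|_{\H^1}\|g\|_{\BMO^+}$ cannot hold for all $f,g$; one must normalize and pass to the Luxemburg quasi-norm using the property $\theta(x,st)\le C_p s^p\theta(x,t)$ for $0<s<1$, as the paper does. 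Finally, the identification of $S(f,g)+T(f,g)$ with the distribution $fg$, which you correctly flag, is settled in the paper by working with $f$ of finite wavelet expansion, replacing $g$ by $\eta g\in L^2$ with $\eta$ a suitable cutoff to invoke the $L^2$ identity, and concluding by density and the continuity of the three operators; this step should be incorporated rather than left as an acknowledged obstacle.
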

 The operators $S$ and $T$ are defined in terms of a wavelet decomposition. The operator $T$ is defined in terms of paraproducts. There is no uniqueness, of course. In fact, the same decomposition of the product $fg$ has already been considered by Dobyinsky and Meyer (see \cite{DM, Do1, Do2}, and also \cite{CDM, Ch}). The action of replacing the product by the operator $T$ was called by them a {\sl renormalization} of the product. Namely, $T$ preserves the cancellation properties of the factor, while $S$ does not. Dobyinsky and Meyer considered $L^2$ data for both factors, and showed that $T(f,g)$ is in the Hardy space $\H^1(\bR^n)$. What is surprising in our context is that both terms inherit some properties of the factors. Even if the product $fg$ is not integrable, the function $S(f,g)$ is, while $T(f,g)$ inherits  cancellation properties of functions in Hardy spaces without being integrable. So, in some way  each term has more properties than expected at first glance.

 Another implicit conjecture of \cite{BIJZ} concerns bilinear operators with cancellations, such as the ones involved in the  $\div$-$\curl$ lemma for instance. In this case it is expected that there is no $L^1$ term. To illustrate this phenomenon, it has been proven in \cite{BFG} that, whenever $F$ and $G$ are two vector fields respectively in $\H^1(\bR^n,\bR^n )$ and  $\BMO(\bR^n,\bR^n )$ such that $F$ is  $\curl$-free and $G$ is $\div$-free, then their scalar product $F\cdot G$ is in $\H^\Phi_w(\bR^n,\bR^n )$ (in fact there is additional  assumption on the $\BMO$ factor). By using the same technique as Dobyinsky to deal with the terms coming from $S$, we give a new proof, without any additional assumption. Namely, we have the following.

 \begin{theorem} \label{div-curl} Let $F$ and $G$ be  two vector fields, one of them in  $\H^1(\bR^n,\bR^n )$ and the other one in  $\BMO(\bR^n,\bR^n )$, such that $\curl F=0$  and  $\div G=0$.Then their scalar product $F\cdot G$ (in the distribution sense) is in $\H^{\log}(\bR^n )$.
 \end{theorem}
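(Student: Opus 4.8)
The plan is to reduce Theorem~\ref{div-curl} (the $\div$-$\curl$ endpoint estimate) to the product decomposition of Theorem~\ref{main}, exploiting the cancellation hypotheses $\curl F=0$ and $\div G=0$ to kill the $L^1$-term $S(F,G)$. The starting observation is that, componentwise, $F\cdot G=\sum_{i=1}^n F_iG_i$ is a sum of products of an $\H^1$ (or $\BMO$) function with a $\BMO$ (or $\H^1$) function, so by Theorem~\ref{main} each product splits as $S(F_i,G_i)+T(F_i,G_i)$. Since $T$ already maps into $\H^{\log}(\bR^n)$ and the target space is linear, it suffices to show that the aggregate remainder $\sum_i S(F_i,G_i)$ lies in $\H^{\log}(\bR^n)$ — in fact the goal will be to show this sum is actually \emph{zero} (or at least in $\H^{\log}$) once the curl-free/div-free structure is used. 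This is precisely the point of Dobyinsky's renormalization: the operator $T$ was built to preserve cancellations, so the full $\div$-$\curl$ quantity should coincide, up to an $\H^{\log}$ error, with $\sum_i T(F_i,G_i)$.

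First I would fix notation by writing $F=\nabla u$ for some potential $u$ (valid since $\curl F=0$ on $\bR^n$) and, dually, expressing the div-free condition $\div G=0$ so that the pairing $\la F,G\ra=\la\nabla u,G\ra$ can be integrated by parts formally against the cancellation. Then I would expand each $S(F_i,G_i)$ in the wavelet basis used to define $S$ and $T$ in the proof of Theorem~\ref{main}. The key step is to track how the wavelet coefficients of $F_i$ and $G_i$ interact: the $S$-piece consists of the ``diagonal'' and ``high-high'' paraproduct terms whose wavelet supports match, and I expect these to combine across the index $i$ into an expression that is annihilated by the two differential constraints. Concretely, I would show that $\sum_i S(F_i,G_i)$ can be rewritten as a discrete analogue of a $\div$-$\curl$ bilinear form acting on matched scales, and that the constraints force a telescoping/cancellation at each scale.

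The main obstacle, I expect, is handling the low-frequency and boundary behaviour rigorously: the formal integration by parts $\int \nabla u\cdot G=-\int u\,\div G=0$ is only valid with enough decay, whereas $u\in\dot{W}^{1,1}$-type regularity and $G\in\BMO$ provide none. The wavelet approach is designed exactly to localize and bypass this, but one must verify that the rearranged sums converge in $\bD'(\bR^n)$ and that no residual terms escape into a genuine nonzero $L^1$ contribution. The second delicate point is that the roles of $F$ and $G$ are symmetric in the hypothesis (``one of them in $\H^1$''), so I would need the decomposition to be robust under swapping which factor carries the cancellation; this amounts to checking that the estimate for $S$ and $T$ in Theorem~\ref{main} is genuinely bilinear and does not privilege one argument. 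Once the cancellation $\sum_i S(F_i,G_i)=0$ (modulo $\H^{\log}$) is established, the conclusion follows immediately, since $F\cdot G=\sum_i T(F_i,G_i)\in\H^{\log}(\bR^n)$ by the continuity of $T$ asserted in Theorem~\ref{main}.
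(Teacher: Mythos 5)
Your opening reduction is exactly the paper's: split $F\cdot G=\sum_j S(F_j,G_j)+\sum_j T(F_j,G_j)$, observe the $T$-terms are already in $\H^{\log}(\bR^n)$, and reduce to the sum of $S$-terms. But from there the proposal has a genuine gap: you hope that the constraints force $\sum_j S(F_j,G_j)=0$ (or make it vanish up to telescoping at matched scales), and this is false. Even in Dobyinsky's $L^2$ theory this sum is a nontrivial function --- it is precisely the ``renormalized'' part that carries the div-curl quantity --- and the constraints do not annihilate it; they upgrade its regularity from $L^1$ to $\H^1$. The formal computation $\int\nabla u\cdot G=-\int u\,\div G=0$ only suggests that $F\cdot G$ has some global cancellation; it gives no mechanism for membership in $\H^{\log}$, and no wavelet ``telescoping'' identity exists, because the diagonal terms $Q_jF_i\,Q_jG_i$ for different components $i$ involve unrelated coefficient products with no algebraic relation that kills them pointwise.

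The missing idea is the conversion of the differential constraints into Riesz-transform identities followed by an antisymmetry argument. The paper writes $F_j=R_jf$ with $f=-\sum_jR_jF_j\in\H^1(\bR^n)$ (curl-free) and uses $\sum_jR_jG_j=0$ (div-free) to rewrite $\sum_j S(F_j,G_j)=\sum_j\bigl(S(R_jf,G_j)+S(f,R_jG_j)\bigr)$, so everything reduces to the Proposition: for $A$ an \emph{odd} Calder\'on--Zygmund operator, $(f,g)\mapsto S(Af,g)+S(f,Ag)$ maps $\H^1(\bR^n)\times\BMO(\bR^n)$ into $\H^1(\bR^n)$. Proving that requires real work you do not sketch: first replace $S=\Pi_3$ by its diagonal part $S_0(f,g)=\sum_{I,\lambda}\langle f,\psi_I^{\lambda}\rangle\langle g,\psi_I^{\lambda}\rangle|\psi_I^{\lambda}|^2$ modulo an $\H^1$ error; then use $A^*=-A$ to express $S_0(Af,g)+S_0(f,Ag)$ through the differences $|\psi_{I'}^{\lambda'}|^2-|\psi_I^{\lambda}|^2$, which have mean zero and $\H^1$ norm growing only logarithmically in the eccentricity between $I$ and $I'$; finally beat that logarithm with the almost-diagonal decay $|\langle A\psi_I^{\lambda},\psi_{I'}^{\lambda'}\rangle|\leq Cp_{\delta}(I,I')$ and conclude by boundedness of almost-diagonal matrices. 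Without this antisymmetry mechanism the proposal cannot be completed. (A small additional point: the case where $G$ is the $\H^1$ factor is not a matter of ``robustness under swapping''; the paper runs a short separate argument with $G_j=R_jg$, $g\in\BMO(\bR^n)$, and $\sum_jR_jF_j=0$, invoking the same Proposition.)
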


 In Section 2 we introduce the spaces $L^{\log}(\bR^n )$ and $\H^{\log}(\bR^n )$ and give the generalized H\"{o}lder inequality that plays a central role when dealing with products of functions respectively in $L^1(\bR^n)$ and $\BMO(\bR^n)$.
 In Sections 3 and 4 we give prerequisites on wavelets and recall the $L^2$ estimates of Dobyinsky. We prove Theorem \ref{main} in Section 5 and Theorem \ref{div-curl} in Section 6.

\section{The space $\H^{\log}(\bR^n)$ and a generalized H\"{o}lder inequality}
We first define the (variable) Orlicz function
$$\theta(x, t):= \frac t {\log (e+|x|)+\log (e+t)}$$
for $x\in\bR^n$ and $t>0$. For fixed $x$ it is an increasing function while $t\mapsto \theta(x, t)/t$ decreases. We have $p<1$ in the following inequalities satisfied by $\theta$.
\begin{eqnarray}
\theta(x, st)&\leq C_p s^p \theta(x,t) \qquad \qquad &0<s<1 \label{less1}\\
\theta(x, st)&\leq s \theta(x,t) \qquad \qquad &s>1 \label{larger1}.
\end{eqnarray}
These two properties are among the ones that are usually required for (constant) Orlicz functions in Hardy Theory, see for instance \cite{Ja, BG, Ky1}. They guarantee, in particular, that $L^{\log}(\bR^n)$, defined as the set of functions $f$ such that
$$\int_{\bR^n} \theta(x, |f(x)|) dx <\infty$$
 is a  vector space. For $f\in L^{\log}(\bR^n)$, we define
$$\|f\|_{L^{\log}}:=\inf \{\lambda>0\;;\; \int_{\bR^n} \theta(x, |f(x)|/\lambda) dx \leq 1\}.$$
It is not a norm, since it is not sub-additive. In place of sub-additivity, there exists a constant $C$ such that, for $f, g\in  L^{\log}(\bR^n)$,
$$\|f+g\|_{L^{\log}}\leq C (\|f\|_{L^{\log}}+ \|g\|_{L^{\log}}).$$
On the other hand, it is homogeneous.

The space $L^{\log}(\bR^n)$ is a complete metric space, with the distance given by
$$\dist(f,g):=\inf \{\lambda>0\;;\; \int_{\bR^n} \theta(x, |f(x)-g(x)|/\lambda) dx \leq \lambda\}$$
(see \cite{RR}, from which proofs can be adapted, and \cite{Ky1}). Because of \eqref{less1}, a sequence $f_k$ tends to $0$ in $L^{\log}(\bR^n)$ for this distance if and only  if $\|f_k\|_{L^{\log}}$ tends to $0$.
\smallskip

Before stating our first proposition on products, we need some notations related to the space $\BMO(\bR^n)$. For  $Q$ a cube of $\bR^n$ and $f$ a locally integrable function, we note $f_Q$ the mean of $f$ on $Q$. We recall that a function $f$ is in $\BMO(\bR^n)$ if
$$\|f\|_{\BMO}:=\sup_Q \frac 1{|Q|}\int_Q |f-f_Q| dx <\infty.$$
We note $\mathbb{Q}:=[0,1)^n$ and, for $f$ a function in $\BMO(\bR^n)$,
$$\|f\|_{\BMO^+}:=|f_{\mathbb{Q}}|+\|f\|_{\BMO}.$$
This is a norm, while the $\BMO$ norm is only a norm on equivalent classes modulo constants.

The aim of this section is to prove the following proposition, which replaces H\"{o}lder Inequality in our context.
\begin{proposition} \label{product_sharp}Let $f\in L^1(\mathbb R^n)$ and $g\in\BMO(\bR^n)$. Then the product $fg$ is in $L^{\log}(\bR^n)$. Moreover, there exists some constant $C$ such that
$$\|f g\|_{L^{\log}}\leq C \|f\|_{L^1}\|g\|_{\BMO^+}.$$
 \end{proposition}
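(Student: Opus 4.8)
The plan is to reduce the proposition to a single uniform bound on $\int_{\bR^n}\theta(x,|f(x)g(x)|)\,dx$. Since $\|\cdot\|_{L^{\log}}$ is homogeneous, I may normalize $\|f\|_{L^1}=\|g\|_{\BMO^+}=1$; by (\ref{less1}) the desired quasi-norm estimate follows once I show $\int_{\bR^n}\theta(x,|fg|)\,dx\le M$ for a purely dimensional constant $M$ (then $\|fg\|_{L^{\log}}\le (C_pM)^{1/p}$, by choosing $\lambda$ with $C_p\lambda^{-p}M\le 1$). From (\ref{larger1}) with $s=2$ one gets the quasi-subadditivity $\theta(x,a+b)\le 2\theta(x,a)+2\theta(x,b)$, which will let me break the product into manageable pieces.

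First I would localize over the unit cubes $Q_k=k+\mathbb{Q}$, $k\in\bZ^n$, and on each $Q_k$ split $g=(g-g_{Q_k})+g_{Q_k}$. The standard $\BMO$ nesting estimate $|g_{Q_k}-g_{\mathbb{Q}}|\le C\log(e+|k|)\,\|g\|_{\BMO}$, together with $\|g\|_{\BMO^+}=1$, gives $|g_{Q_k}|\le C\log(e+|x|)$ for $x\in Q_k$. Hence
\[
\theta(x,|fg|)\le 2\,\theta\bigl(x,|f|\,|g_{Q_k}|\bigr)+2\,\theta\bigl(x,|f|\,|g-g_{Q_k}|\bigr).
\]
The mean term is controlled \emph{exactly} by the weight in the denominator of $\theta$: from $|g_{Q_k}|\le C\log(e+|x|)$ and the definition of $\theta$, $\theta(x,|f|\,|g_{Q_k}|)\le \tfrac{C|f|\log(e+|x|)}{\log(e+|x|)}=C|f|$, so this part integrates to $C\|f\|_{L^1}=C$. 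This is precisely the step where $L^{\log}$ is tailored to swallow the (at most logarithmic) growth of $\BMO$ functions.

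The oscillation term is the crux. The natural tool is that $\Phi(t)=t/\log(e+t)$ is the Orlicz function obtained as the ``product'' of $L^1$ and $\exp L$: the generalized H\"older inequality for Orlicz spaces bounds $\int_{Q_k}\Phi(|f|\,|g-g_{Q_k}|)\,dx$ by $\|f\|_{L^1(Q_k)}\,\|g-g_{Q_k}\|_{\exp L(Q_k)}$, and John--Nirenberg gives $\|g-g_{Q_k}\|_{\exp L(Q_k)}\le C\|g\|_{\BMO}\le C$. Since $\sum_k\|f\|_{L^1(Q_k)}=\|f\|_{L^1}=1$, this would close the argument, and this is where the generalized H\"older inequality announced for this section enters.

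The main obstacle is the summation over the infinitely many cubes. One cannot simply use $\theta(x,t)\le\Phi(t)$ and add the per-cube bounds: converting the Luxemburg-norm inequality into a bound on $\int_{Q_k}\Phi$ costs, in the worst case, a factor $\log(e+1/\|f\|_{L^1(Q_k)})$, and the resulting series $\sum_k\|f\|_{L^1(Q_k)}\log(e+1/\|f\|_{L^1(Q_k)})$ may diverge (for instance when $\|f\|_{L^1(Q_k)}\sim|k|^{-n}(\log|k|)^{-2}$). The point is that one must retain the \emph{full} denominator $\log(e+|x|)+\log(e+t)$ rather than discarding the $\log(e+|x|)$: on the distant cubes this weight is $\gtrsim\log(e+|k|)$ and supplies the decay that makes the series converge. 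Concretely, I would run the generalized H\"older argument on $Q_k$ with the localized Musielak function $t\mapsto t/(\log(e+|k|)+\log(e+t))$, whose complementary function is of exponential type carrying a prefactor $(e+|k|)^{-1}$, and combine this decay with the John--Nirenberg tail estimate on $\{x\in Q_k:|g-g_{Q_k}|>\log(e+|k|)\}$. Making this double bookkeeping --- weight decay against exponential tail --- produce a convergent series in $k$ is the real work of the proof.
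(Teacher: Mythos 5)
Your overall skeleton is sound and is in fact the same mechanism as the paper's proof: normalize by homogeneity and reduce via \eqref{less1} to a uniform bound on $\int_{\bR^n}\theta(x,|fg|)\,dx$; absorb the (logarithmically growing) averages of $g$ into the weight $\log(e+|x|)$; and play the remaining oscillation against John--Nirenberg exponential integrability, keeping the weight $\log(e+|x|)$ in the denominator to make things summable at infinity. The paper, however, never needs the cube-by-cube bookkeeping: it normalizes $g_{\mathbb Q}=0$ and $\|g\|_{\BMO}\le\alpha$ with $\alpha$ a small dimensional constant, invokes the single \emph{global} weighted John--Nirenberg bound $\int_{\bR^n} e^{|g|}(e+|x|)^{-(n+1)}dx\le\kappa$, and then applies the elementary pointwise inequality \eqref{hoder-variable}, namely $st/(M+\log(e+st))\le e^{t-M}+s$, with $M=(n+1)\log(e+|x|)$, $s=|f(x)|$, $t=|g(x)|$; integrating this pointwise bound finishes the proof immediately.

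The gap is that your proposal stops exactly where the proof has to be carried out, and the plan you sketch for that step is quantitatively wrong as stated. You propose to run the generalized H\"older argument on each $Q_k$ with the function $t\mapsto t/(\log(e+|k|)+\log(e+t))$, whose complementary function carries, in your words, a prefactor $(e+|k|)^{-1}$; the per-cube bound this produces is of the form $C(e+|k|)^{-1}\int_{Q_k}e^{c|g-g_{Q_k}|}dx+\|f\|_{L^1(Q_k)}$, and the series $\sum_{k\in\bZ^n}(e+|k|)^{-1}$ diverges in every dimension $n\ge 1$: a first-power prefactor is never summable, one needs a power strictly larger than $n$. The same defect affects your alternative bookkeeping: with the normalization $\|g\|_{\BMO^+}=1$, the John--Nirenberg tail at threshold $\log(e+|k|)$ only gives measure decay $(e+|k|)^{-C_2}$ with $C_2$ the dimensional John--Nirenberg constant, which need not exceed $n$. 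Both defects are repairable, and the repair is precisely what the paper builds in: either inflate the weight by a factor $n+1$, using $\theta(x,t)\le (n+1)\,t/\bigl((n+1)\log(e+|x|)+\log(e+t)\bigr)$ so that the exponential prefactor becomes $(e+|k|)^{-(n+1)}$ (this is the choice $M=(n+1)\log(e+|x|)$ in the paper), or normalize $\|g\|_{\BMO}\le\alpha$ small so that John--Nirenberg yields decay $(e+|k|)^{-c/\alpha}$ with $c/\alpha>n$. Since you explicitly defer this ``double bookkeeping'' as ``the real work of the proof,'' and the version of it you describe would not converge, the crux of the proposition remains unproven in your write-up.
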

 \begin{proof} It is easy to adapt the proof given in \cite{BIJZ}, which leads to a weaker statement. We prefer to give a complete proof here, which has the advantage to be easier to follow than the one given in \cite{BIJZ}. We first  restrict to functions $f$ of norm $1$ and functions $g$ such that $g_{\mathbb Q}=0$ and $\|g\|_{\BMO}\leq \alpha$ for some uniform constant $\alpha$. Let us  prove in this case the existence of a uniform constant $\delta$ such that
\begin{equation}\label{equ1}
 \int_{\bR^n} \theta(x, |f(x)g(x)|) dx \leq \delta.
\end{equation}
 The constant $\alpha$ is chosen so that, by John-Nirenberg inequality, one has the inequality
 $$\int_{\R^n} \frac{e^{|g|}}{(e+|x|)^{n+1}} dx \leq \kappa, $$
 with $\kappa$ a uniform constant that depends only of the dimension $n$ (see \cite{St}).
  Our main tool is the following lemma.
 \begin{lemma} Let $M\geq 1$. The following inequality holds for $s, t>0$,
 \begin{equation}\label{hoder-variable}
    \frac {st}{M+\log(e+st)}\leq e^{t-M}+s.
 \end{equation}
 \end{lemma}
 \begin{proof} By monotonicity it is sufficient to consider the case when $s=e^{t-M}$. More precisely, it is sufficient to prove that
 $$\frac {t}{M+\log(e+te^{t-M})}\leq 1.$$
 This is direct when $t\leq M$. Now, for $t\geq M$, the denominator is bounded below by $M+t-M$, that is, by $t$.
 \end{proof}
Let us go back to the proof of the proposition. We choose $M:= (n+1)\log(e+|x|)$. Then
$$ \frac {|f(x)g(x)|}{(n+1)(\log (e+|x|) +\log (e+|f(x)g(x)|))} \leq  \frac {e^{|g(x)|}}{(e+|x|)^{n+1}} +|f(x)|.$$
After integration we get \eqref{equ1} with $\delta=(n+1)(\kappa +1)$. Let us then assume that $|g_{\mathbb Q}|\leq \alpha$ while the other assumptions on $f$ and $g$ are the same.
We then write $f g=f g_{\mathbb Q}+ f(g-g_{\mathbb Q})$ and find again the estimate \eqref{equ1} with $\delta=(n+1)(\kappa +1)+\alpha$. Using \eqref{less1}, this means that, for $\|f\|_{L^1}=1$ and $\|g\|_{\BMO^+}=\alpha$ and for $p<1$, we have the inequality $\|fg\|_{L^{\log}}\leq (\delta C_p)^{1/p}$. The general case follows by homogeneity, with $C=\delta \alpha^{-1}$.

\end{proof}
Remark that we only used the fact that $g$ is in the exponential class for the weight $(e+|x|)^{-(n+1)}$.
\smallskip

Finally let us define the  space $\H^{\log}(\bR^n)$. We first define the grand maximal function of a distribution $f\in \mathcal S' (\bR^n)$ as follows. Let $\mathcal \F$ be the set of functions $\Phi$ in $\S(\bR^n)$ such that $|\Phi(x)|+|\nabla \Phi(x)|\leq (1+|x|)^{-(n+1)}$. For $t>0$, let $\Phi_t(x):=t^{-n}\Phi(\frac xt)$. Then
\begin{equation}\mathcal M f(x):= \sup_{\Phi\in
\F}\sup_{t>0}|f*\Phi_t(x)|.\label{grand-maximal}
\end{equation}
By analogy with Hardy-Orlicz spaces, we define the space $\H^{\log}(\bR^n)$ as the space of tempered distributions such that $\mathcal M f$ in $L^{\log}(\bR^n)$. We need the fact that $\H^{\log}(\bR^n)$ is a complete metric space. Convergence in $\H^{\log}(\bR^n)$ implies convergence in distribution. The space $\H^1(\bR^n)$, that is, the space of functions $f\in L^1(\bR^n)$ such that $\mathcal M f$ in $L^{1}(\bR^n)$, is strictly contained in $\H^{\log}(\bR^n)$.

\section{Prerequisites on Wavelets}

Let us consider a wavelet basis of $\bR$ with compact support. More explicitly, we are first given a $\C^1(\bR)$ wavelet in Dimension one, called $\psi$, such that $\{2^{j/2}\psi (2^j x-k)\}_{j,k\in \bZ}$ form an $L^2(\bR)$ basis. We assume that this wavelet basis comes for a multiresolution analysis (MRA) on $\bR$, as defined below (see \cite{Me}).
\begin{Definition}
A multiresolution analysis (MRA) on $\mathbb R$ is defined as an increasing sequence $\{V_j\}_{j\in\bZ}$ of closed subspaces of $L^2(\mathbb R)$ with the following four properties

i) $\bigcap_{j\in\mathbb Z}V_j=\{0\}$ and $\overline{\bigcup_{j\in\mathbb Z}V_j}=L^2(\mathbb R)$,

ii) for every $f\in L^2(\mathbb R)$ and every $j\in\mathbb Z$, $f(x)\in V_j$ if and only if $f(2x)\in V_{j+1}$,

iii) for every $f\in L^2(\mathbb R)$ and every $k\in\mathbb Z$, $f(x)\in V_0$ if and only if $f(x-k)\in V_0$,

iv) there exists a function $\phi\in L^2(\mathbb R)$, called the scaling function, such that the family $\{\phi_k(x)=\phi(x-k): k\in\mathbb Z\}$ is an orthonormal basis for $V_0$.
\end{Definition}
It is classical that, when given an (MRA) on $\bR$, one can find a wavelet $\psi$ such that $\{2^{j/2}\psi (2^j x-k)\}_{k\in \bZ}$ is an orthonormal basis of $W_j$, the orthogonal complement of $V_j$ in $V_{j+1}$. Moreover, by Daubechies Theorem (see \cite{Da}), it is possible to find a suitable (MRA) so that $\phi$ and $\psi$ are $\C^1(\bR)$ and compactly supported, $\psi$ has mean $0$ and $\int x\psi (x)dx=0$, which is known as the moment condition. We could content ourselves, in the following theorems, to have $\phi$ and $\psi$ decreasing sufficiently rapidly  at $\infty$, but proofs are simpler with compactly supported wavelets. More precisely we assume that $\phi$ and $\psi$ are supported in the interval $1/2+m(-1/2, +1/2)$, which is obtained from $(0,1)$ by a dilation by $m$ centered at $1/2$.

Going back to $\bR^n$, we recall that a wavelet basis of $\bR^n$ is found  as follows. We call $E$ the set $E=\{0,1\}^n\setminus \{(0,\cdots, 0)\}$ and, for $\lambda \in E$, state $\psi^{\lambda}(x)=\phi^{\lambda_1}(x_1)\cdots \phi^{\lambda_n}(x_n)$, with
$\phi^{\lambda_j}(x_j)=\phi(x_j)$ for $\lambda_j =0$ while $\phi^{\lambda_j}(x_j)=\psi(x_j)$ for $\lambda_j =1$. Then the set $\{2^{nj/2}\psi^{\lambda} (2^j x-k)\}_{j\in \bZ, k\in\bZ^n, \lambda\in E}$ is an orthonormal basis of $L^2(\bR^n)$.
As it is classical, for $I$ a dyadic cube of $\bR^n$, which may be written as the set of $x$ such that $2^j x-k \in  (0,1)^n$, we note
$$\psi_I^{\lambda}(x)=2^{nj/2}\psi^{\lambda} (2^j x-k).$$
We also note $\phi_I=2^{nj/2}\phi_{(0,1)^n} (2^j x-k)$, with $\phi_{(0,1)^n}$ the scaling function in $n$ variables, given by
$\phi_{(0,1)^n}(x)=\phi(x_1)\cdots \phi(x_n)$.
In the sequel, the letter $I$ always refers to dyadic cubes. Moreover, we note $kI$  the cube of same center dilated by the coefficient $k$. Because of the assumption on the supports of $\phi$ and $\psi$, the functions
$\psi_I^{\lambda}$ and $\phi_I$ are supported in the cube $mI$.

The wavelet basis $\{\psi_I^\lambda\}$, obtained by letting $I$ vary among dyadic cubes and $\lambda$ in $E$, comes from an (MRA) in $\bR^n$, which we still note $\{V_j\}_{j\in\bZ}$, obtained by taking tensor products of the one dimensional ones. The functions $\phi_I$, taken for a fixed length $|I|=2^{-jn}$, form a basis of $V_j$.  As in the one dimensional case we note $W_j$ the orthogonal complement of $V_j$ in $V_{j+1}$. As it is classical, we note $P_j $ the orthogonal projection onto $V_j$ and $Q_j$ the orthogonal projection onto $W_j$. In particular,
\begin{eqnarray*}
  f &=& \sum_{i\in \bZ} Q_i f  \\
   &=& P_j f+ \sum_{i\geq j} Q_i f.
\end{eqnarray*}

\section{The $L^2$ estimates for the product of two functions}

We summarize here the main results of Dobyinsky \cite{Do2}.

Let us consider two $L^2$ functions $f$ and $g$, which we express through their wavelet expansions, for instance
$$f=\sum_{\lambda \in E} \sum_I \langle f, \psi_I^{\lambda}\rangle \psi_I^{\lambda}.$$ Then, when $f$ and $g$ have a finite wavelet expansion, we have
\begin{eqnarray}
fg
&=&
\sum_{j\in\mathbb Z}(P_jf)(Q_j g)+\sum_{j\in\mathbb Z}(Q_j f)(P_j g)+\sum_{j\in\mathbb Z}(Q_j f)(Q_j g)\label{decomp}\\
&:=&
\Pi_1 (f,g)+\Pi_2(f,g)+\Pi_3(f,g)\nonumber.
\end{eqnarray}
The two operators $\Pi_1$ and $\Pi_2$ are called paraproducts. A posteriori each term of  Formula \eqref{decomp}  can be given a meaning for all functions $f,g\in L^2(\bR^n)$. Indeed the two operators $\Pi_1$ and $\Pi_2$, which coincide, up to permutation of $f$ and $g$, extend as bilinear operators from $L^2(\bR^n)\times L^2(\bR^n)$ to $\H^1(\bR^n)$, see \cite{Do2}, while the operator $\Pi_3$ extends to an operator from $L^2(\bR^n)\times L^2(\bR^n)$ to $L^1(\bR^n)$.

The two $L^2$ estimates are given in the following two lemmas. We sketch their proof for the convenience of the reader as this will be the basis of our proofs in the context of $\H^1(\bR^n)$ and $\BMO(\bR^n)$. Details may be found in  \cite{Do2}.
\begin{lemma} \label{pi3L2}
The bilinear operator $\Pi_3$  is a bounded operator from $L^2(\bR^n)\times L^2(\bR^n)$ into $L^1(\bR^n)$.
\end{lemma}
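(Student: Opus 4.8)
The plan is to prove that $\Pi_3(f,g)=\sum_{j\in\bZ}(Q_jf)(Q_jg)$ is bounded from $L^2\times L^2$ into $L^1$ by expanding each factor into its wavelet components at level $j$ and exploiting the almost-orthogonality built into the wavelet basis. First I would write, for each $j$, $Q_jf=\sum_{\lambda\in E}\sum_{|I|=2^{-jn}}\langle f,\psi_I^\lambda\rangle\psi_I^\lambda$ and similarly for $g$, so that
\begin{equation*}
(Q_jf)(Q_jg)=\sum_{\lambda,\mu\in E}\sum_{|I|=|J|=2^{-jn}}\langle f,\psi_I^\lambda\rangle\langle g,\psi_J^\mu\rangle\,\psi_I^\lambda\psi_J^\mu.
\end{equation*}
Because $\psi_I^\lambda$ is supported in $mI$ and all cubes $I$ at a fixed scale are essentially disjoint, the product $\psi_I^\lambda\psi_J^\mu$ vanishes unless $I$ and $J$ are within a bounded distance of each other; at a fixed scale each cube $I$ meets only a bounded number (depending only on $m$ and $n$) of cubes $J$. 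This localizes the double sum over $I,J$ to a sum over $I$ together with finitely many neighbours $J$.

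Next I would estimate the $L^1$ norm. Using $\|\psi_I^\lambda\psi_J^\mu\|_{L^1}\le\|\psi_I^\lambda\|_{L^2}\|\psi_J^\mu\|_{L^2}=1$ (the wavelets are $L^2$-normalized) together with the Cauchy--Schwarz inequality on the coefficient products, I expect to obtain
\begin{equation*}
\|(Q_jf)(Q_jg)\|_{L^1}\lesssim\Big(\sum_{|I|=2^{-jn},\,\lambda}|\langle f,\psi_I^\lambda\rangle|^2\Big)^{1/2}\Big(\sum_{|J|=2^{-jn},\,\mu}|\langle g,\psi_J^\mu\rangle|^2\Big)^{1/2}=\|Q_jf\|_{L^2}\|Q_jg\|_{L^2},
\end{equation*}
where the finite-neighbour bound absorbs the off-diagonal terms into the implicit constant. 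Summing over $j$ and applying Cauchy--Schwarz in the $j$ variable yields
\begin{equation*}
\|\Pi_3(f,g)\|_{L^1}\lesssim\sum_{j\in\bZ}\|Q_jf\|_{L^2}\|Q_jg\|_{L^2}\le\Big(\sum_j\|Q_jf\|_{L^2}^2\Big)^{1/2}\Big(\sum_j\|Q_jg\|_{L^2}^2\Big)^{1/2}=\|f\|_{L^2}\|g\|_{L^2},
\end{equation*}
the last equality being the Plancherel identity $\sum_j\|Q_jf\|_{L^2}^2=\|f\|_{L^2}^2$ for the orthogonal projections onto the spaces $W_j$.

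The main obstacle I anticipate is controlling the off-diagonal interaction cleanly, namely justifying that the cross terms with $I\neq J$ (but overlapping supports) and the cross terms in $\lambda,\mu$ do not destroy the diagonal-type estimate. The compact support assumption on $\phi$ and $\psi$ is exactly what makes this manageable: it reduces the genuinely infinite double sum to a sum with a uniformly bounded number of interacting neighbours per cube, so a crude Cauchy--Schwarz suffices and no delicate almost-orthogonality decay estimate is needed. I would also take care, at the start, to argue that it is enough to establish the bound for $f,g$ with finite wavelet expansions, so that all manipulations of the sums are legitimate, and then extend to general $L^2$ data by density and the resulting continuity of $\Pi_3$.
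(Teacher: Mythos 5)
Your proof is correct, and its skeleton is the same as the paper's: bound each $\|(Q_jf)(Q_jg)\|_{L^1}$ by $\|Q_jf\|_{L^2}\|Q_jg\|_{L^2}$, sum over $j$ using Cauchy--Schwarz in the $j$ variable, and finish with the orthogonality identity $\sum_j\|Q_jf\|_{L^2}^2=\|f\|_{L^2}^2$. The one real difference is how you obtain the single-scale estimate: you expand both factors into wavelets and invoke the compact support of $\psi$ to reduce to a finite-neighbour coefficient sum, whereas the paper gets the same bound (with constant $1$, no wavelet structure at all) from the Cauchy--Schwarz/H\"older inequality for a product of two $L^2$ functions, $\|uv\|_{L^1}\le\|u\|_{L^2}\|v\|_{L^2}$. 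In particular, the ``main obstacle'' you anticipate --- controlling off-diagonal interactions between overlapping cubes --- is a non-issue for this lemma: the product of two $L^2$ functions is in $L^1$ regardless of any support geometry, so your entire localization apparatus, while correct, is superfluous here, and it costs you a constant depending on $m$ and $n$ that the direct argument avoids. (The localization idea you develop is not wasted in the larger scheme of the paper: an argument of exactly that flavour --- replacing $g$ by its wavelet pieces living near $2mR$ --- is what the paper uses later to pass from this $L^2\times L^2$ lemma to the $\H^1\times\BMO$ bound for $\Pi_3$. But for the present statement the short route is the right one.) Your final remark about first proving the bound for finite wavelet expansions and extending by density is sound and matches the paper's treatment of how $\Pi_3$ is given meaning on all of $L^2\times L^2$.
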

\begin{proof}
The series $\sum_{j\in\bZ} Q_jf Q_j g$ is normally convergent in $L^1(\bR^n)$, with
\begin{eqnarray*}
\sum _{j\in\bZ} \|Q_jf Q_jg\|_{L^1}&\leq&  \sum_{j\in\bZ} \|Q_jf \|_{L^2}\| Q_jg\|_{L^2}\\
&\leq &\Big(\sum_{j\in\bZ} \|Q_jf \|_{L^2}^2\Big)^{1/2}\Big(\sum_{j\in\bZ} \|Q_jg \|_{L^2}^2\Big)^{1/2}\\
 &\leq& C \|f\|_{L^2} \|g\|_{L^2}.
\end{eqnarray*}
This concludes for $\Pi_3$.
\end{proof}
\begin{lemma}\label{pi1L2}
The bilinear operator $\Pi_1$, a priori well defined for $f$ and $g$ having  a finite wavelet expansion, extends to $L^2(\bR^n)\times L^2(\bR^n)$ into a bounded operator to $\H^1(\bR^n)$.
\end{lemma}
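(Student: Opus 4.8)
The plan is to reduce the whole estimate to a single square-function inequality, exploiting the cancellation that is built into the low--high structure of $\Pi_1$. First I would expand the band-pass factor in the wavelet basis, writing
$$\Pi_1(f,g)=\sum_{j\in\bZ}(P_jf)(Q_jg)=\sum_{\lambda\in E}\sum_{j\in\bZ}\sum_{|I|=2^{-jn}}\langle g,\psi_I^\lambda\rangle\,(P_jf)\,\psi_I^\lambda .$$
The key algebraic observation is that each building block $(P_jf)\psi_I^\lambda$ is supported in $mI$ and has mean zero: indeed $\int_{\bR^n}(P_jf)\psi_I^\lambda\,dx=\langle P_jf,\psi_I^\lambda\rangle=0$, since $P_jf\in V_j$ while $\psi_I^\lambda\in W_j$ and $V_j\perp W_j$. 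Hence, setting $w_j:=(P_jf)(Q_jg)$, each $w_j$ is a function localized at scale $2^{-j}$ which, being $\C^1$ with compact support and carrying the above cancellation, is a superposition of (unnormalized) $\H^1$-atoms at scale $2^{-j}$.

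The second step is to invoke the Littlewood--Paley (tent-space) synthesis characterization of $\H^1(\bR^n)$. Because the $w_j$ are localized at scale $2^{-j}$ and are mean-zero, and because $\psi$ is $\C^1$ with the first-order moment condition, the sum $\sum_j w_j$ is controlled by the square function of its pieces:
$$\|\Pi_1(f,g)\|_{\H^1}=\Big\|\sum_{j\in\bZ} w_j\Big\|_{\H^1}\lesssim\Big\|\big(\sum_{j\in\bZ}|w_j|^2\big)^{1/2}\Big\|_{L^1}.$$
This is the only nonelementary ingredient, and it is precisely here that the wavelet hypotheses (compact support, $\C^1$ regularity, vanishing moments) are genuinely used.

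The third step is an elementary pointwise estimate followed by H\"older's inequality. Factoring the low-pass and band-pass parts and using Cauchy--Schwarz in $j$,
$$\big(\sum_j|w_j(x)|^2\big)^{1/2}=\big(\sum_j|P_jf(x)|^2|Q_jg(x)|^2\big)^{1/2}\le\big(\sup_{j}|P_jf(x)|\big)\,\big(\sum_j|Q_jg(x)|^2\big)^{1/2}\lesssim Mf(x)\,S g(x),$$
where $M$ is the Hardy--Littlewood maximal operator and $Sg:=\big(\sum_j|Q_jg|^2\big)^{1/2}$. The bound $\sup_j|P_jf|\lesssim Mf$ follows because the reproducing kernel of $P_j$ is bounded by $C2^{jn}$ and supported in $|x-y|\lesssim 2^{-j}$. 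Integrating and applying H\"older,
$$\|\Pi_1(f,g)\|_{\H^1}\lesssim\|Mf\cdot Sg\|_{L^1}\le\|Mf\|_{L^2}\,\|Sg\|_{L^2}\lesssim\|f\|_{L^2}\|g\|_{L^2},$$
using the $L^2$-boundedness of $M$ and the wavelet Plancherel identity $\|Sg\|_{L^2}^2=\sum_j\|Q_jg\|_{L^2}^2=\|g\|_{L^2}^2$. Since the estimate holds for $f,g$ with finite wavelet expansion, which are dense in $L^2(\bR^n)$, the operator $\Pi_1$ extends by continuity to all of $L^2(\bR^n)\times L^2(\bR^n)$ with values in $\H^1(\bR^n)$.

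I expect the main obstacle to be the second step. The elementary computation of the third step only shows that the square function of the pieces $w_j$ lies in $L^1$; upgrading this to an $\H^1$ bound on $\sum_j w_j$ requires the scale-localization and the cancellation of the $w_j$, and must be justified either through the atomic decomposition of tent spaces (Coifman--Meyer--Stein) or by a direct almost-orthogonality estimate of the grand maximal function $\mathcal M(\Pi_1(f,g))$ by $Mf\cdot Sg$. It is at this point that the mere mean-zero property is not enough on its own, and one has to use the full strength of the $\C^1$ regularity, the rapid (here compact) support, and the moment condition on $\psi$.
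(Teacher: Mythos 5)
Your Steps 1 and 3 are sound and in fact coincide with how the paper finishes (the bound $\sup_j|P_jf|\lesssim Mf$, Cauchy--Schwarz, the maximal theorem, and $\|Sg\|_{L^2}=\|g\|_{L^2}$). The gap is Step 2, and it is not merely a missing justification: the inequality
$$\Big\|\sum_{j}w_j\Big\|_{\H^1}\lesssim\Big\|\big(\sum_{j}|w_j|^2\big)^{1/2}\Big\|_{L^1}$$
is \emph{false} for superpositions of same-scale, mean-zero, $\C^1$, compactly supported bumps, which is the only structure your argument invokes. Counterexample in dimension $1$, at the single scale $j=0$: let $\eta$ be a nonnegative $\C^1$ bump supported in $(0,1)$ with $\int\eta=1$, set $\psi:=\eta-\eta(\cdot-1)$ (mean zero, $\C^1$, supported in $(0,2)$), and put $w_0:=\sum_{k=0}^{N-1}\psi(\cdot-k)$, a superposition of unit-scale molecules adapted to the unit cubes $[k,k+1)$, all with coefficient $1$. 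The sum telescopes: $w_0=\eta-\eta(\cdot-N)$, so the right-hand side is $\|w_0\|_{L^1}=O(1)$, while $\|w_0\|_{\H^1}\sim\log N$ (its grand maximal function is $\gtrsim 1/|x|$ for $2\leq x\leq N/2$). Thus cancellation \emph{within} a scale can make the pointwise square function tiny without reducing the $\H^1$ norm at all; mean-zero-ness, regularity and scale localization of the pieces cannot save the step, and neither tent-space atoms nor almost-orthogonality will prove a false inequality. If you want Step 2 for the specific functions $w_j=(P_jf)(Q_jg)$, you would have to use their finer structure, which your argument never does.

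What is true, and what the paper uses, is the \emph{coefficient} version of the square-function bound: if $w_j=\sum_{I,\lambda}s_{I,\lambda}m_{I,\lambda}$ with $m_{I,\lambda}$ molecules adapted to the dyadic cubes $I$, $|I|=2^{-jn}$, then
$$\Big\|\sum_j w_j\Big\|_{\H^1}\lesssim\Big\|\Big(\sum_{j}\sum_{|I|=2^{-jn}}\sum_{\lambda}|s_{I,\lambda}|^2|I|^{-1}\chi_I\Big)^{1/2}\Big\|_{L^1},$$
where the coefficients stay spread over their cubes and are never recombined pointwise. Concretely, the paper writes $P_jf=\sum_{|I'|=2^{-jn}}\langle f,\phi_{I'}\rangle\phi_{I'}$, uses the finite-overlap of supports to reduce to finitely many families
$F_{\lambda,k}=\sum_j\sum_{|I|=2^{-jn}}\langle f,\phi_{k2^{-j}+I}\rangle\langle g,\psi_I^\lambda\rangle\,\phi_{k2^{-j}+I}\psi_I^{\lambda}$,
observes that the functions $|I|^{1/2}\phi_{k2^{-j}+I}\psi_I^{\lambda}$ form a system of molecules (mean zero by $V_j\perp W_j$, $\C^1$, generated by dilation and translation), and invokes Meyer's molecular lemma to replace these molecules by the wavelets $\psi_I^{\lambda}$ with the \emph{same} coefficients; only then is the wavelet square-function characterization of $\H^1$ applied. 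From there your Step 3 goes through verbatim with the coefficient square function, since $2^{nj/2}|\langle f,\phi_{k2^{-j}+I}\rangle|\lesssim\inf_{x\in I}Mf(x)$. So the repair is precise: replace your pointwise square function by the coefficient square function and justify the synthesis step by molecular theory (Meyer, or Frazier--Jawerth), which is exactly the paper's route.
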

\begin{proof}
Let us recall that one can write
 $$P_j f =  \sum_{|I|=2^{-jn}} \langle f, \phi_I\rangle \phi_I.$$
 This means that  $P_jf Q_j g$ can be written as a linear combination of $\psi_I^{\lambda}\phi_{I'}$, with $|I|=|I'|=2^{-jn}$. As before, for fixed $I$, this function is non zero only for a finite number of $I'$. More precisely,  such $I'$s can be written as $k2^{-j}+I$, with $k\in K$, where $K$ is the set of points with integer coordinates contained in $(-m, +m]^n$.   So $\Pi_1(f,g)$ can be written as a sum in $\lambda\in E$ and $k\in K$ of
 $$F_{\lambda, k}:=\sum_{j\in\bZ}\sum_{|I|=2^{-jn}}\langle f, \phi_{k2^{-j}+I} \rangle\langle g, \psi_I^\lambda \rangle \phi_{k2^{-j}+I} \psi_I^{\lambda}.$$
  At this point, we use the fact that the functions $|I|^{1/2}\phi_{k2^{-j}+I} \psi_I^{\lambda}$ are of mean zero because of the orthogonality of $V_j$ and $W_j$. Moreover they are of class $\C^1(\bR^n)$ and are obtained from the one for which $I=(0,1)^n$ through the same process of dilation and translation as the wavelets. So they form what is called a system of molecules. It is well-known (see Meyer's book \cite{Me}) that such a linear combination of molecules has its  $\H^1$ norm bounded by $C$ times the $\H^1$ norm of the  linear combination of wavelets with the same coefficients. Namely, we are linked to prove that
$$\|\sum_j\sum_{|I|=2^{-jn}}\sum_{\lambda \in E}\langle f, \phi_{k2^{-j}+I} \rangle\langle g, \psi_I^\lambda \rangle 2^{nj/2} \psi_I^{\lambda}\|_{\H^1}\leq C\|f\|_{L^2} \, \|g\|_{L^2}.$$
We use the characterization of $\H^1(\bR^n)$ through wavelets to bound this norm by the $L^1$ norm of its square function, given by
$$\left(\sum_j\sum_{|I|=2^{-jn}}\sum_{\lambda \in E}|\langle f, \phi_{k2^{-j}+I} \rangle\langle g, \psi_I^\lambda \rangle |^2 2^{nj}|I|^{-1} \chi_I\right)^{1/2}.$$
This function is bounded at $x$ by
$$\sup_{I\ni x} |\langle f, |I|^{-1/2}\phi_I \rangle| \times \left(\sum_j\sum_{|I|=2^{-jn}}\sum_{\lambda \in E}|\langle g, \psi_I^\lambda \rangle |^2 |I|^{-1} \chi_I(x)\right)^{1/2}.$$
The first factor is bounded, up to a constant, by the Hardy Littlewood maximal function of $f$, which we note $Mf$. We conclude by using Schwarz inequality, then the maximal theorem to bound the $L^2$ norm of $Mf$ by the $L^2$ norm of $f$, then the fact that the $L^2$ norm of the second factor is the $L^2$ norm of $g$.
\end{proof}

We will need the expression of $\Pi_1(f, g)$ and $\Pi_2(f, g)$ when $f$ has a finite wavelet expansion while $g$ in only assumed to be in $L^2(\bR^n)$. The following lemma is immediate for $g$ with a finite wavelet expansion, then by passing to the limit otherwise.
\begin{lemma} Assume that $f$ has a finite wavelet expansion and $Q_j f=0$ for $j\notin [j_0, j_1)$. For $g\in L^2(\bR^n)$, one has
\begin{eqnarray}
 \Pi_1(f, g) &=& \sum_{j= j_0}^{j_1-1} P_j f Q_j g + f\sum_{j\geq j_1} Q_j g \label{pile1expression}\\
 \Pi_2(f, g) &=& f P_{j_0}g +  \sum_{j=j_0}^{j_1-1} Q_j f \left(\sum_{j_0\leq i\leq j-1} Q_i g \right). \label{pile2expression}
 \end{eqnarray}
 \end{lemma}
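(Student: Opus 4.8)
The plan is to verify the two identities first in the elementary case where $g$ also has a finite wavelet expansion, where every sum that appears is finite and Formula \eqref{decomp} holds literally, and then to recover the general case $g\in L^2(\bR^n)$ by a continuity argument. Throughout I would use only the two structural relations of the (MRA): the telescoping identity $P_jf=\sum_{i<j}Q_if$ (equivalently $P_{j+1}=P_j+Q_j$) together with $f=P_jf+\sum_{i\geq j}Q_if$. The hypothesis $Q_jf=0$ for $j\notin[j_0,j_1)$ then yields the three facts that drive the whole computation: $f=\sum_{j=j_0}^{j_1-1}Q_jf$; $\,P_jf=0$ for $j\leq j_0$; and $P_jf=f$ for $j\geq j_1$.

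For $\Pi_1$ I would start from the definition $\Pi_1(f,g)=\sum_{j\in\bZ}(P_jf)(Q_jg)$ and split the sum according to the position of $j$ relative to the interval $[j_0,j_1)$. The terms with $j\leq j_0$ vanish because $P_jf=0$ there; the terms with $j_0<j<j_1$ give $\sum_{j=j_0}^{j_1-1}(P_jf)(Q_jg)$ once one observes that adjoining the (zero) term $j=j_0$ costs nothing; and the terms with $j\geq j_1$ contribute $\sum_{j\geq j_1}f\,(Q_jg)$, which factors as $f\sum_{j\geq j_1}Q_jg$. This is precisely \eqref{pile1expression}. For $\Pi_2$ I would substitute $P_jg=P_{j_0}g+\sum_{i=j_0}^{j-1}Q_ig$ into $\Pi_2(f,g)=\sum_{j=j_0}^{j_1-1}(Q_jf)(P_jg)$ and separate the two pieces; the first collapses by $\sum_{j=j_0}^{j_1-1}Q_jf=f$ to give $f\,P_{j_0}g$, while the second is exactly the double sum in \eqref{pile2expression}.

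To pass to the limit, I would approximate $g$ by functions $g_N$ with finite wavelet expansion and $g_N\to g$ in $L^2(\bR^n)$. On the left-hand sides, Lemma \ref{pi1L2} (and its analogue for $\Pi_2$, valid since $\Pi_1$ and $\Pi_2$ coincide up to permutation of the factors) shows that $\Pi_1(f,g_N)\to\Pi_1(f,g)$ and $\Pi_2(f,g_N)\to\Pi_2(f,g)$ in $\H^1(\bR^n)$, hence in $L^1(\bR^n)$. On the right-hand sides, the finite sums $\sum_{j=j_0}^{j_1-1}(P_jf)(Q_jg_N)$ and $\sum_{j=j_0}^{j_1-1}(Q_jf)\sum_{i=j_0}^{j-1}Q_ig_N$ converge termwise in $L^1(\bR^n)$, because each $Q_jg_N\to Q_jg$ in $L^2(\bR^n)$ while the fixed factors $P_jf$ and $Q_jf$ are bounded with compact support, so multiplication sends $L^2$ convergence to $L^1$ convergence. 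Since the identities hold for every $g_N$, they persist in the limit.

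The only genuinely nonfinite object, and hence the step I expect to require the most care, is the tail $f\sum_{j\geq j_1}Q_jg$ in \eqref{pile1expression}. I would dispose of it by recognizing $\sum_{j\geq j_1}Q_jg=(I-P_{j_1})g$, which converges in $L^2(\bR^n)$, so that the tail is the well-defined $L^1$ function obtained by multiplying this limit by the bounded compactly supported function $f$. The continuity of $h\mapsto fh$ from $L^2(\bR^n)$ into $L^1(\bR^n)$ then simultaneously legitimizes interchanging $f$ with the infinite summation and gives $f\sum_{j\geq j_1}Q_jg_N\to f\sum_{j\geq j_1}Q_jg$ in $L^1(\bR^n)$, which closes the limiting argument.
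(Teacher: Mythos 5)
Your proof is correct and takes essentially the same approach as the paper: the paper's entire argument is the one-line remark that the lemma is immediate for $g$ with a finite wavelet expansion and then follows by passing to the limit, which is exactly the structure you carry out, with the telescoping algebra and the $L^2$-to-$L^1$ continuity details made explicit.
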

 \section{Products of functions in $\H^1(\bR^n)$ and $\BMO(\bR^n)$}

Let us first recall the wavelet characterization of $\BMO(\bR^n)$: if $g$ is in $\BMO(\bR^n)$, then for all (not necessarily dyadic) cubes $R$ , we have that
$$\Big(|R|^{-1}\sum_{\lambda\in E}\sum_{I\subset R}|\langle g,\psi_I^{\lambda}\rangle|^2\Big)^{1/2} \leq C\|g\|_{\BMO},$$
 and the supremum over all cubes $R$ of the left hand side is equivalent to the $\BMO$ norm of $g$.

Remark that the wavelet coefficients of a function $g$ in $\BMO$ are well defined since $g$ is locally square integrable. The $\langle g,\phi_I\rangle$'s are well defined as well. So $Q_j g$ makes sense, as well as $P_j g$. Indeed, they are sums of the corresponding series in $\psi_I^{\lambda}$ or $\phi_I$ with $|I|=2^{-jn}$, and at each point only a finite number of terms are non zero.

Moreover, we claim that \eqref{pile1expression} and \eqref{pile2expression} are well defined for $f$ with a finite wavelet expansion and $g$ in $\BMO(\bR^n)$. This is direct for $\Pi_2(f,g)$. For $\Pi_1(f,g)$, it is sufficient to see that
the series $\sum_{j\geq j_1} Q_j g$ converges in $L^2(R)$, where $R$ is a large cube containing the support of $f$. This comes from the wavelet characterization of $\BMO(\bR^n)$. Indeed, on $R$ one has
$$\sum_{ j_1\leq j\leq k} Q_j g=\sum_{\lambda\in E}\sum_{I\subset mR, 2^{-nk}\leq |I|\leq 2^{-nj_1}}\langle g, \psi_I^{\lambda}\rangle \psi_I^{\lambda}. $$
This is the partial sum of an orthogonal series, that converges in $L^2(\bR^n)$.

As a final remark, we find the same expressions for $\Pi_1(f, g)$, $\Pi_2(f, g)$, $\Pi_3(f, g)$ and $fg$ when $g$ is replaced by $\eta g$, where $\eta$ is a smooth compactly supported function such that $\eta$ is equal to $1$ on a large cube $R$. Just take $R$ sufficiently large to contain the supports of $f$, $Q_jf$, and  all   functions $\phi_I$ and $\psi_I^{\lambda}$ that lead to a non zero contribution in the expressions of the four functions under consideration. Since $\eta g$ is in $L^2(\bR^n)$, we have the identity \eqref{decomp}. This leads to the identity
\begin{equation}\label{decomp2}
    fg=\Pi_1(f, g)+\Pi_2(f, g)+\Pi_3(f, g).
\end{equation}
So Theorem \ref{main} will be a consequence of the boundedness of the operators $\Pi_1(f, g)$, $\Pi_2(f, g)$ and $\Pi_3(f, g)$.

\smallskip

Before considering this boundedness, we describe the atomic decomposition of the Hardy space $\H^1(\bR^n)$, which will play a fundamental role in the proofs.

We recall that a function $a$ is called a (classical) atom of $\H^1(\bR^n)$ related to the (not necessarily dyadic) cube $R$ if $a$ is in $L^2(\bR^n)$, is supported in $R$, has mean zero and is such that $\|a\|_{L^2}\leq |R|^{-1/2}$.

For simplicity we will
consider atoms that are adapted to the wavelet basis under consideration. More precisely, we  call the function $a$ a $\psi$-atom related to the dyadic cube $Q$ if it is an $L^2$ function that may be written as
\begin{equation}\label{atom}
a=\sum_{I\subset R}\sum_{\lambda \in E} a_{I, \lambda} \psi_I^\lambda
\end{equation}
such that, moreover, $\|a\|_{L^2}\leq|R|^{-1/2}$.
Remark that $a$ is compactly supported in  $mR$ and has mean $0$, so that it is a classical atom related to $mR$, up to the multiplicative constant $m^{n/2}$. It is standard that an atom is in $\H^1(\bR^n)$ with norm bounded by a uniform constant. The atomic decomposition gives the converse.
\begin{Theorem}[Atomic decomposition]
There exists some constant $C$ such that all functions $f \in \H^1(\bR^n)$ can be written as the limit in the distribution sense and in $\H^1$ of an infinite sum
\begin{equation}\label{atomic}
f =\sum_\ell \mu_{\ell} a_{\ell}
\end{equation}
with $a_{\ell}$ $\psi$-atoms related to some dyadic cubes $R_{\ell}$ and $\mu_{\ell}$ constants such that
$$\sum_{\ell} |\mu_{\ell}|\leq C \|f\|_{\H^1}.$$
Moreover, for $f$ with a finite wavelet series, we can choose an atomic decomposition with a finite number of  atoms  $a_{\ell}$, which have also a finite wavelet expansion extracted from the one of $f$.
\end{Theorem}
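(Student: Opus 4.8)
The plan is to deduce the decomposition from the wavelet square function characterization of $\H^1(\bR^n)$ already invoked in the proof of Lemma \ref{pi1L2}, via a Calder\'on--Zygmund stopping-time argument on the wavelet coefficients. Write $f=\sum_{I,\lambda}c_{I,\lambda}\psi_I^\lambda$ with $c_{I,\lambda}=\langle f,\psi_I^\lambda\rangle$, and set
$$\mathcal S f(x)=\Big(\sum_{I}\sum_{\lambda\in E}|c_{I,\lambda}|^2|I|^{-1}\chi_I(x)\Big)^{1/2},$$
so that $\|\mathcal S f\|_{L^1}$ is equivalent to $\|f\|_{\H^1}$. For $k\in\bZ$ I put $\Omega_k=\{x:\mathcal Sf(x)>2^k\}$ and let $\widehat\Omega_k=\{x:M_d\chi_{\Omega_k}(x)>1/2\}$ be its dyadic-maximal enlargement, $M_d$ denoting the dyadic maximal operator. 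Since $\mathcal Sf\in L^1$ each $\Omega_k$ has finite measure, the weak type $(1,1)$ bound for $M_d$ gives $|\widehat\Omega_k|\le 2|\Omega_k|$, and $\widehat\Omega_k$ is a union of maximal dyadic cubes, which I denote $\{R_{k,j}\}_j$.

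Next I assign each dyadic cube $I$ carrying a nonzero coefficient to a single generation. Because $\Omega_k$ increases to $\{\mathcal Sf>0\}$ as $k\to-\infty$ while $|\widehat\Omega_k|\to0$ as $k\to+\infty$, the integer $k(I):=\max\{k:I\subseteq\widehat\Omega_k\}$ is well defined; let $G_k$ be the family of cubes with $k(I)=k$. Each $I\in G_k$ lies in $\widehat\Omega_k$, hence in a unique maximal cube $R_{k,j}$, which fixes an assignment $I\mapsto(k,j)$. I then form the pre-atoms $b_{k,j}=\sum_{I\in G_k,\,I\subseteq R_{k,j}}\sum_\lambda c_{I,\lambda}\psi_I^\lambda$, set $\mu_{k,j}=|R_{k,j}|^{1/2}\|b_{k,j}\|_{L^2}$ and $a_{k,j}=\mu_{k,j}^{-1}b_{k,j}$ (discarding those $(k,j)$ with $b_{k,j}=0$). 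By orthonormality of the wavelets $\|a_{k,j}\|_{L^2}=|R_{k,j}|^{-1/2}$, and $a_{k,j}$ is supported on cubes contained in $R_{k,j}$, so each $a_{k,j}$ is a $\psi$-atom in the sense of \eqref{atom}. Since the generations $G_k$ and, within each, the cubes $R_{k,j}$ partition all cubes with $c_{I,\lambda}\neq0$, one has $\sum_{k,j}b_{k,j}=f$ as a wavelet series.

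The heart of the matter is the bound $\sum_{k,j}\mu_{k,j}\le C\|f\|_{\H^1}$. Fixing $k$ and applying Cauchy--Schwarz in $j$ together with orthonormality,
$$\sum_j\mu_{k,j}\le\Big(\sum_j|R_{k,j}|\Big)^{1/2}\Big(\sum_{I\in G_k,\,\lambda}|c_{I,\lambda}|^2\Big)^{1/2}=|\widehat\Omega_k|^{1/2}\Big(\sum_{I\in G_k,\,\lambda}|c_{I,\lambda}|^2\Big)^{1/2}.$$
To estimate the energy of $G_k$ I use that the enlargement forces genuine stopping: if $I\in G_k$ then $I\not\subseteq\widehat\Omega_{k+1}$, so some $x_0\in I$ has $M_d\chi_{\Omega_{k+1}}(x_0)\le1/2$, whence $|I\cap\Omega_{k+1}|\le\tfrac12|I|$, i.e. $|I|\le 2|I\cap\Omega_{k+1}^c|$. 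Therefore
$$\sum_{I\in G_k,\,\lambda}|c_{I,\lambda}|^2\le 2\int_{\Omega_{k+1}^c}\Big(\sum_{I\in G_k,\,\lambda}|c_{I,\lambda}|^2|I|^{-1}\chi_I\Big)\,dx\le 2\cdot 2^{2(k+1)}|\widehat\Omega_k|\le C2^{2k}|\Omega_k|,$$
since the integrand is a partial sum of $(\mathcal Sf)^2$, hence pointwise $\le(\mathcal Sf)^2\le 2^{2(k+1)}$ on $\Omega_{k+1}^c$, and is supported in $\widehat\Omega_k$. Combining the two displays gives $\sum_j\mu_{k,j}\le C2^k|\Omega_k|$, and the elementary inequality $\sum_k2^k|\Omega_k|\le2\|\mathcal Sf\|_{L^1}$ yields $\sum_{k,j}\mu_{k,j}\le C\|f\|_{\H^1}$.

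Finally, absolute convergence of $\sum_{k,j}\mu_{k,j}a_{k,j}$ in $\H^1$ (each atom having $\H^1$ norm bounded by a uniform constant) together with completeness of $\H^1(\bR^n)$ gives convergence to $f$ in $\H^1$, and hence in the distribution sense. When $f$ has a finite wavelet expansion, only finitely many $G_k$ and $R_{k,j}$ occur, so the decomposition is finite and each atom is extracted from the wavelet series of $f$, which settles the last assertion. I expect the main obstacle to be precisely the energy estimate for $G_k$: without the maximal enlargement $\widehat\Omega_k$, a stopping cube $I\not\subseteq\Omega_{k+1}$ could still be almost entirely covered by $\Omega_{k+1}$, and the proportionality $|I\cap\Omega_{k+1}^c|\ge\tfrac12|I|$ --- which is what converts the $\ell^2$ energy of the coefficients into the measure of a level set --- would fail.
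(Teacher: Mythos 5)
Your proof is correct. The paper does not actually write out a proof of this theorem --- it calls it ``a small variation of a standard statement'' and refers to Hern\'andez--Weiss \cite{HW}, Section 6.5 --- and the argument given there is precisely the wavelet square-function, stopping-time construction you carry out (level sets of $\mathcal S f$, dyadic maximal enlargement, grouping coefficients by generation and maximal cube, Cauchy--Schwarz plus the energy estimate on $\Omega_{k+1}^c$), so your proposal is essentially the standard proof the paper intends.
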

This theorem  is a small variation of a standard statement. The second part may be obtained easily by taking  the atomic decomposition given in \cite{HW}, Section 6.5. Remark that the interest of dealing with finite atomic decompositions has been underlined recently, for instance in \cite{MSV, MSV2}.

\smallskip

We want now to give sense to the decomposition \eqref{decomp} for $f\in \H^1(\bR^n)$ and $g\in \BMO(\bR^n)$. We will do it when $f$ has a finite wavelet expansion.

Let us first consider that two operators $\Pi_1$ and $\Pi_3$.

\begin{Theorem}
$\Pi_3$ extends into a bounded bilinear operator from $\H^1(\mathbb R^n)\times \BMO(\mathbb R^n)$ into $L^1(\mathbb R^n)$.
\label{pi3}\end{Theorem}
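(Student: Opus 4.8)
The plan is to prove the $L^1$ bound for $\Pi_3$ by following the structure of the $L^2$ argument in Lemma \ref{pi3L2}, but replacing the Cauchy--Schwarz estimate on the full series by the atomic decomposition of $\H^1$ together with the wavelet characterization of $\BMO$. By bilinearity and the finite-atom version of the Atomic decomposition theorem, it suffices to prove a uniform estimate
$$\|\Pi_3(a,g)\|_{L^1}\leq C\|g\|_{\BMO^+}$$
when $a$ is a single $\psi$-atom related to a dyadic cube $R$, and then sum against the coefficients $\mu_\ell$ with $\sum_\ell|\mu_\ell|\leq C\|f\|_{\H^1}$. So the real content is the single-atom estimate.

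First I would fix an atom $a=\sum_{I\subset R}\sum_\lambda a_{I,\lambda}\psi_I^\lambda$ with $\|a\|_{L^2}\leq|R|^{-1/2}$, and write
$$\Pi_3(a,g)=\sum_{j}(Q_j a)(Q_j g).$$
Because $a$ has a finite wavelet expansion supported (in frequency/scale) on cubes $I\subset R$, only the scales $j$ with $2^{-jn}\leq|R|$ contribute, and at each such scale $Q_j a$ is supported in $mR$. The key point is that each term $(Q_ja)(Q_jg)$ is supported in $mR$, so the \emph{whole} product $\Pi_3(a,g)$ is supported in the fixed cube $mR$. This localisation is what replaces the weight in \cite{BIJZ}: on a fixed cube, $\BMO$ behaves like a genuine $L^2$ space. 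Then I would estimate as in Lemma \ref{pi3L2},
$$\|\Pi_3(a,g)\|_{L^1}\leq\sum_j\|Q_ja\|_{L^2}\|Q_jg\|_{L^2}\leq\Big(\sum_j\|Q_ja\|_{L^2}^2\Big)^{1/2}\Big(\sum_{I\subset mR}\sum_\lambda|\langle g,\psi_I^\lambda\rangle|^2\Big)^{1/2}.$$
The first factor is just $\|a\|_{L^2}\leq|R|^{-1/2}$. For the second factor I would invoke the wavelet characterization of $\BMO$ recalled at the start of Section 5, applied to the cube $mR$, which gives
$$\Big(\sum_{I\subset mR}\sum_\lambda|\langle g,\psi_I^\lambda\rangle|^2\Big)^{1/2}\leq C|mR|^{1/2}\|g\|_{\BMO}\leq C'|R|^{1/2}\|g\|_{\BMO}.$$
Multiplying the two factors, the powers of $|R|$ cancel and I am left with $\|\Pi_3(a,g)\|_{L^1}\leq C\|g\|_{\BMO}$, uniformly in the atom, which is exactly the desired single-atom bound.

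Summing over the atomic decomposition then gives $\|\Pi_3(f,g)\|_{L^1}\leq\sum_\ell|\mu_\ell|\,\|\Pi_3(a_\ell,g)\|_{L^1}\leq C\|f\|_{\H^1}\|g\|_{\BMO^+}$ for $f$ with finite wavelet expansion, and a density/limiting argument using completeness extends $\Pi_3$ to all of $\H^1\times\BMO$. The main obstacle I anticipate is not the estimate itself but the bookkeeping that makes it rigorous: one must be careful that the Cauchy--Schwarz splitting across scales is legitimate (handled by the finite expansion of $a$), that $Q_jg$ really is controlled only by the coefficients $\langle g,\psi_I^\lambda\rangle$ with $I\subset mR$ (which uses the compact support of the wavelets, so $Q_jg$ restricted to $mR$ sees only finitely many, localised, coefficients), and that the passage from the finite-atom estimate to the full bilinear operator respects the convergence in $L^1$. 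The use of $\|g\|_{\BMO^+}$ rather than $\|g\|_{\BMO}$ is a harmless technical point since constants are annihilated by each $Q_j$, so $\Pi_3(f,g)$ depends on $g$ only through $\BMO$.
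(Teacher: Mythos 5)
Your proof is correct and follows essentially the same route as the paper's: reduce to a single $\psi$-atom by the finite atomic decomposition, use the compact support of the wavelets to see that only the coefficients of $g$ on cubes near $R$ contribute, and conclude by the Cauchy--Schwarz argument of Lemma \ref{pi3L2} together with the wavelet characterization of $\BMO$ (the paper packages this by replacing $g$ with the localized function $b$ and then citing Lemma \ref{pi3L2}, rather than redoing Cauchy--Schwarz inline). The only slip is the localization set: the cubes $I$ that matter are those with $mI\cap mR\neq\emptyset$, which need not satisfy $I\subset mR$ but are contained in a fixed dilate such as $2mR$ (the one the paper uses), a harmless adjustment of constants since the $\BMO$ characterization applies to any cube.
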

\begin{proof}We  consider $f$ with a finite wavelet expansion and $g\in \BMO(\bR^n)$, so that $\Pi_3(f, g)$ is well defined as a finite sum in $j$. Let us give an estimate of its $L^1$-norm.
We use the atomic decomposition of $f$ given in \eqref{atomic}, that is, $f =\sum_{\ell=1}^{L} \mu_{\ell} a_{\ell}$ where each $a_{\ell}$ is a $\psi$-atom related to the dyadic cube $R_{\ell}$ and $\sum_{\ell=1}^{L} |\mu_{\ell}| \leq C \|f\|_{\H^1}$. Recall that each atom has also a finite wavelet expansion extracted from the one of $f$. From this, it is sufficient to prove that, for a $\psi$-atom $a$, which is supported in $R$ and has $L^2$ norm  bounded by $|R|^{-1/2}$, we have the estimate
\begin{equation}\label{Pi3atom}
\|\Pi_3(a, g)\|_{L^1}\leq C\|g\|_{\BMO}.
\end{equation}
We claim that
$\Pi_3(a,g)=\Pi_3(a, b)$, where $b:=\sum_{\lambda\in E}\sum_{I\in 2mR}\langle g, \psi_I ^{\lambda}\rangle \psi_I^{\lambda}$. Indeed, in the wavelet expansion of $g$ we only have to consider at each scale $j$  the terms $\psi_I^{\lambda}$ for which $\psi_I^{\lambda} \psi_{I'}^{\lambda'}$ is not identically $0$ for all $I'$ contained in $R$ such that $|I|=|I'|=2^{-jn}$. In other words we want $mI\cap mI'\neq \emptyset$, which is only possible for $I$ in $2mR$. Now let us recall the wavelet characterization of $\BMO(\bR^n)$: for all cubes $Q$, we have that
$$\Big(|Q|^{-1}\sum_{\lambda\in E}\sum_{I\subset Q}|\langle g,\psi_I^{\lambda}\rangle|^2\Big)^{1/2} \leq C\|g\|_{\BMO},$$
 and the supremum on all cubes $Q$ of the left hand side is equivalent to the $\BMO$ norm of $g$. It follows that the $L^2$ norm of $b$ is bounded by $Cm^{n/2}|R|^{1/2}\|g\|_{\BMO}$. This allows to conclude for the proof of \eqref{Pi3atom}, using Lemma \ref{pi3L2}.
\end{proof}

\bigskip

Next we look at $\Pi_1$.
\begin{Theorem}
$\Pi_1$ extends into  a bounded bilinear operator from $\H^1(\mathbb R^n)\times BMO(\mathbb R^n)$ into $\H^1(\mathbb R^n)$.
\end{Theorem}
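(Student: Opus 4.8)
The plan is to repeat, almost verbatim, the scheme used for $\Pi_3$ in Theorem \ref{pi3}: reduce to a single $\psi$-atom, localize the $\BMO$ factor to a fixed dilate of the atom's cube, and then invoke the $L^2$ estimate of Lemma \ref{pi1L2}. First I would take $f$ with a finite wavelet expansion and use its finite atomic decomposition \eqref{atomic}, $f=\sum_{\ell}\mu_\ell a_\ell$ with $\sum_\ell|\mu_\ell|\le C\|f\|_{\H^1}$, where each $a_\ell$ is a $\psi$-atom related to a dyadic cube $R_\ell$. Since $\H^1(\bR^n)$ is a genuine normed space, bilinearity and the triangle inequality give $\|\Pi_1(f,g)\|_{\H^1}\le\sum_\ell|\mu_\ell|\,\|\Pi_1(a_\ell,g)\|_{\H^1}$, so everything reduces to the uniform atomic estimate
\begin{equation*}
\|\Pi_1(a,g)\|_{\H^1}\le C\|g\|_{\BMO}
\end{equation*}
for a $\psi$-atom $a$ related to a dyadic cube $R$, with $\|a\|_{L^2}\le|R|^{-1/2}$.

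The heart of the matter is to localize $g$. Let $|R|=2^{-j_0n}$, so that $a=\sum_{i\ge j_0}Q_i a$ with each $Q_i a$ supported in $mR$. Using \eqref{pile1expression},
$$\Pi_1(a,g)=\sum_{j=j_0}^{j_1-1}(P_j a)(Q_j g)+a\sum_{j\ge j_1}Q_j g,$$
and the MRA relations give $P_j a=\sum_{j_0\le i<j}Q_i a$ (in particular $P_{j_0}a=0$), so every $P_j a$ is supported in $mR$ and only scales $2^{-j}<2^{-j_0}$ occur. Tracking supports, a term $\psi_I^\lambda$ of $Q_j g$ contributes to $(P_j a)(Q_j g)$, or to $a\,\psi_I^\lambda$ in the tail, only when $mI$ meets $mR$; combined with $|I|<|R|$ this forces $I\subset CR$ for a fixed dilation constant $C=C(m,n)$. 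Hence $\Pi_1(a,g)=\Pi_1(a,b)$, where
$$b:=\sum_{\lambda\in E}\sum_{I\subset CR}\langle g,\psi_I^\lambda\rangle\,\psi_I^\lambda.$$
I expect this support bookkeeping through the two-term structure of \eqref{pile1expression} to be the only delicate point; the rest is mechanical.

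The wavelet characterization of $\BMO(\bR^n)$ applied to the cube $CR$ then gives
$$\|b\|_{L^2}^2=\sum_{\lambda\in E}\sum_{I\subset CR}|\langle g,\psi_I^\lambda\rangle|^2\le C|CR|\,\|g\|_{\BMO}^2\le C'|R|\,\|g\|_{\BMO}^2,$$
so $b\in L^2(\bR^n)$ with $\|b\|_{L^2}\le C|R|^{1/2}\|g\|_{\BMO}$. Since $a,b\in L^2(\bR^n)$, Lemma \ref{pi1L2} applies and yields
$$\|\Pi_1(a,g)\|_{\H^1}=\|\Pi_1(a,b)\|_{\H^1}\le C\|a\|_{L^2}\|b\|_{L^2}\le C|R|^{-1/2}\,|R|^{1/2}\|g\|_{\BMO}=C\|g\|_{\BMO},$$
which is the desired atomic estimate.

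Summing over the finite atomic decomposition gives $\|\Pi_1(f,g)\|_{\H^1}\le C\|f\|_{\H^1}\|g\|_{\BMO}$ for every $f$ with a finite wavelet expansion. Since such $f$ are dense in $\H^1(\bR^n)$ (the system $\{\psi_I^\lambda\}$ being an unconditional basis of $\H^1$) and $\H^1(\bR^n)$ is complete, $\Pi_1(\cdot,g)$ extends by continuity to a bounded operator on all of $\H^1(\bR^n)$ with operator norm controlled by $\|g\|_{\BMO}$, giving the asserted bounded bilinear extension. One could instead argue directly, as in the proof of Lemma \ref{pi1L2}, by writing $\Pi_1(a,g)$ as a superposition of molecules and estimating its square function pointwise by $Ma$ times the square function of the localized $g$; but the localization route above is cleaner since it reuses the $L^2$ bound as a black box.
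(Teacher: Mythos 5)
Your proof is correct and follows essentially the same route as the paper: reduce via the finite atomic decomposition to a single $\psi$-atom, replace $g$ by its localized wavelet part $b$ supported on a fixed dilate of $R$, bound $\|b\|_{L^2}\leq C|R|^{1/2}\|g\|_{\BMO}$ by the wavelet characterization of $\BMO$, and invoke Lemma \ref{pi1L2}. The only difference is that you spell out details the paper leaves implicit (the support bookkeeping through \eqref{pile1expression} and the final density argument), which is a fair expansion rather than a different method.
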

\begin{proof}
Again, we consider  $\Pi_1(f, g)$ for $f$ with a finite wavelet expansion, so that it is well defined by \eqref{pile1expression}. As in the previous theorem we can consider separately each atom. So, as before,   let $a$ be such a $\psi$-atom. One can estimate $\Pi_1(a, g)$ as in the previous theorem.  We again claim that
$\Pi_1(f,g)=\Pi_1(f, b)$, where $b:=\sum_{\lambda\in E}\sum_{I\in 2mR}\langle g, \psi_I ^{\lambda}\rangle \psi_I^{\lambda}$. We then use Lemma \ref{pi1L2} to conclude that
\begin{equation}\label{Pi2atom}
\|\Pi_1(a, g)\|_{\H^1}\leq C\|g\|_{\BMO},
\end{equation}
 which we wanted to prove.
\end{proof}

We now consider the last term.
\begin{Theorem}\label{pi2}
$\Pi_2$ extends into  a bounded bilinear operator from $\H^1(\mathbb R^n)\times BMO^+(\mathbb R^n)$ into $\H^{\log}(\bR^n)$.
\end{Theorem}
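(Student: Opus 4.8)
The plan is to combine the $\H^1$-atomic decomposition of $f$ with the expression \eqref{pile2expression} for $\Pi_2$, isolating inside $\Pi_2(a,g)$ the one term that destroys cancellation. First I would reduce to the case where $f=a$ is a single $\psi$-atom attached to a dyadic cube $R$: since every $f$ with finite wavelet expansion is a finite sum $\sum_\ell\mu_\ell a_\ell$ with $\sum_\ell|\mu_\ell|\le C\|f\|_{\H^1}$, it suffices to control each $\Pi_2(a_\ell,g)$ together with the resulting series. Exactly as in the proofs for $\Pi_1$ and $\Pi_3$, one checks that $\Pi_2(a,g)$ only feels $g$ near $R$: by the support properties of $\phi_I$ and $\psi_I^\lambda$, each product $Q_ja\,P_jg$ involves only the coefficients $\langle g,\phi_{I'}\rangle$ with $I'$ in a fixed dilate of $R$, so that $\Pi_2(a,g)$ is supported in $CmR$.

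Next I would split the coarse projection near $R$ into its (constant) average $c_R$ and an oscillation, writing $\Pi_2(a,g)=c_R\,a+G_a$ with $G_a:=a\,(P_{j_0}g-c_R)+\sum_{j=j_0}^{j_1-1}Q_ja\,(P_jg-P_{j_0}g)$. The remainder $G_a$ only involves \emph{differences} of low-frequency projections of $g$, which measure the oscillation of $g$ across scales and are therefore controlled by $\|g\|_{\BMO}$ rather than by the (possibly large) averages of $g$. Using the molecular structure of the functions $\psi_I^\lambda\phi_{I'}$ exactly as in Lemma \ref{pi1L2}, together with the wavelet characterization of $\BMO$, I expect the uniform bound $\|G_a\|_{\H^1}\le C\|g\|_{\BMO}$. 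Summing, $\sum_\ell\mu_\ell G_{a_\ell}$ then converges in $\H^1$ with norm at most $C\|f\|_{\H^1}\|g\|_{\BMO}$, and since $\H^1\subset\H^{\log}$ this piece is harmless.

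The whole difficulty is concentrated in the bad part $B:=\sum_\ell\mu_\ell\,c_{R_\ell}a_\ell$, a superposition of atoms weighted by the averages $c_{R_\ell}$ of $g$. Each $c_{R_\ell}a_\ell$ is individually a multiple of an $\H^1$ atom, but $\sum_\ell|\mu_\ell||c_{R_\ell}|$ may diverge because $|c_{R_\ell}|$ grows logarithmically in the position and scale of $R_\ell$; this is precisely the obstruction to membership in $\H^1$, and precisely what $\H^{\log}$ is designed to absorb. To place $B$ in $\H^{\log}$ I would bound its grand maximal function by $\mathcal Mf B(x)\le\sum_\ell|\mu_\ell||c_{R_\ell}|\,\mathcal M a_\ell(x)$, insert the standard pointwise bound for the maximal function of an atom, and then use that $t\mapsto\theta(x,t)$ is concave with $\theta(x,0)=0$, hence subadditive, to estimate $\int_{\bR^n}\theta(x,\mathcal Mf B(x))\,dx$ term by term. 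At this point the mechanism is exactly that of Proposition \ref{product_sharp}: the denominator $\log(e+|x|)+\log(e+t)$ of $\theta$ simultaneously absorbs the logarithmic growth of $c_{R_\ell}$ and the tail factor $(e+|x|)^{-(n+1)}$, precisely because the local averages of $g$ lie in the exponential class for that weight by John--Nirenberg. Concretely I would reduce this estimate to the inequality $\|(\mathcal Mf f)\,\tilde g\|_{L^{\log}}\le C\|\mathcal Mf f\|_{L^1}\|g\|_{\BMO^+}$ supplied by Proposition \ref{product_sharp}, using that $\mathcal Mf f\in L^1$ because $f\in\H^1$.

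The main obstacle, as this makes clear, is twofold and lies entirely in the bad part. First, one must prove the uniform estimate $\|G_a\|_{\H^1}\le C\|g\|_{\BMO}$ for the cancellation-preserving remainder; this is the place where the molecular/square-function arguments of Section~4 have to be carried out with $\BMO$ (rather than $L^2$) coefficients, controlling the differences $P_jg-P_{j_0}g$ through the Carleson-type wavelet characterization of $\BMO$. Second, one must control the series $\sum_\ell\mu_\ell c_{R_\ell}a_\ell$ in the non-normed space $\H^{\log}$, where additivity fails: instead of summing quasi-norms I would estimate the Musielak--Orlicz integral of the maximal function directly and let the weight $\theta$ tame the logarithmic divergence that excludes this term from $\H^1$. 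Everything else is bookkeeping on supports and scales.
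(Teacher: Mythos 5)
Your skeleton is the same as the paper's: reduce to a single $\psi$-atom, split $\Pi_2(a,g)$ into a cancellation-preserving part plus (a local average of $g$) times an atom --- your $c_Ra+G_a$ is, up to the normalization $\sum_I|I|^{1/2}\phi_I\equiv 1$, exactly the paper's Lemma \ref{writing}, $h^{(1)}+\kappa g_Rh^{(2)}$ --- and your treatment of $G_a$ (molecular estimates as in Lemma \ref{pi1L2} for the localized part, plus the duality bound $|g_R-|I|^{-1/2}\langle g,\phi_I\rangle|\le C\|g\|_{\BMO}$ for the coefficients of $aP_{j_0}g$) is the paper's argument. The genuine gap is in the bad part, and it is precisely the step you describe as ``estimate the Musielak--Orlicz integral of the maximal function directly, term by term, by subadditivity of $\theta(x,\cdot)$''. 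That route cannot give the bound \eqref{cauchy}, which must be \emph{linear} in $\sum_\ell|\mu_\ell|$. Since $t\mapsto\theta(x,t)/t$ is decreasing, one has $\theta(x,st)\ge s\,\theta(x,t)$ for $0<s<1$: a small coefficient can never be pulled out of the modular linearly, only with the loss $C_ps^p$, $p<1$, of \eqref{less1}. Concretely, take $g(x)=\log\frac1{|x|}$ and an atom $a$ with $|a|\sim|R|^{-1}$ on a cube $R$ of side $2^{-N}$ near the origin, so that $|g_R|\sim N\|g\|_{\BMO}$; then for $s\le N^{-1}2^{-Nn}$ the quantity $s|g_R|\mathcal M a$ is at most $1$ on the core, the term $\log(e+t)$ in the denominator of $\theta$ is $O(1)$ there and no longer absorbs the logarithm of the scale, and
\begin{equation*}
\int_{\bR^n}\theta\big(x,s\,|g_R|\,\mathcal M a(x)\big)\,dx\;\ge\; c\,s\,N\,\|g\|_{\BMO},
\end{equation*}
which overshoots the needed per-term bound $Cs\|g\|_{\BMO^+}$ by the unbounded factor $N$. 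Choosing scales $N_\ell=2^\ell$ and coefficients $|\mu_\ell|\propto 2^{-\ell}$ makes the term-by-term sum diverge while $\sum_\ell|\mu_\ell|<\infty$. So subadditivity alone is a dead end; the theorem is not false, but this proof of it fails.

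What is missing is the device that makes Proposition \ref{product_sharp} applicable at all: the numbers $g_{R_\ell}$ are averages over \emph{different} cubes, so $\sum_\ell|\mu_\ell||g_{R_\ell}|\mathcal M a_\ell$ is not of the form $(L^1\hbox{ function})\times\tilde g$ for any single function $\tilde g$, and your displayed inequality $\|(\mathcal M f)\tilde g\|_{L^{\log}}\le C\|\mathcal M f\|_{L^1}\|g\|_{\BMO^+}$ has nothing to act on. The paper's key step is the pointwise swap $g_{R_\ell}=g(x)+(g_{R_\ell}-g(x))$, which gives
\begin{equation*}
\sum_\ell|\mu_\ell|\,|g_{R_\ell}|\,\mathcal M a_\ell\;\le\;|g|\sum_\ell|\mu_\ell|\,\mathcal M a_\ell\;+\;\sum_\ell|\mu_\ell|\,|g-g_{R_\ell}|\,\mathcal M a_\ell .
\end{equation*}
The first term is $|g|$ times an $L^1$ function of norm $\le C\sum_\ell|\mu_\ell|$, and Proposition \ref{product_sharp} bounds its $L^{\log}$ quasi-norm; the second term is placed in $L^1\subset L^{\log}$ by the uniform estimate $\int_{\bR^n}|g-g_R|\,\mathcal M a\,dx\le C\|g\|_{\BMO}$ for an atom adapted to $R$ (reduce to $R=\mathbb Q$ by dilation--translation, use Schwarz near the cube and the tail bound $\mathcal M a(x)\le C(1+|x|)^{-(n+1)}$ together with Stein's inequality $\int_{\bR^n}|g-g_{\mathbb Q}|(1+|x|)^{-(n+1)}dx\le C\|g\|_{\BMO}$). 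After this regrouping only a two-term quasi-triangle inequality is needed, never an infinite sum of quasi-norms or modulars. This exchange of $g_{R_\ell}$ for $g(x)$, and the $L^1$ control of the resulting error, are the crux of the proof and are absent from your plan.
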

\begin{proof}
The main point is the following lemma.
\begin{lemma}\label{writing}
let $a$ be a $\psi$-atom with a finite wavelet expansion related to the cube $R$ and $g\in \BMO$. Then we can write
\begin{equation}\label{atom-maj}
    \Pi_2(a,g)= h^{(1)}+ \kappa g_R h^{(2)}
\end{equation}
where $\|h^{(1)}\|_{\H^1}\leq C   \|g\|_{\BMO}$ and $h^{(2)}$ is an atom related to $mR$. Here $g_R$ is the mean of $g$ on $R$ and $\kappa$ a uniform constant, independent of $a$ and $g$.
\end{lemma}
Let us conclude from the lemma, which we take for granted for the moment. Let $f= \sum_{\ell=1}^L \mu_{\ell} a_{\ell}$ be the atomic decomposition of the function $f$,  which has a finite wavelet expansion.  Let us prove the existence of some uniform constant $C$ such that
\begin{equation}\label{cauchy}
   \left\| \mathcal M \left(\sum_{\ell= 1}^{L}\mu_{\ell}\Pi_2(a_{\ell}, g)\right)\right\|_{L^{\log}}\leq C\|g\|_{\BMO^+}\left(\sum_{\ell=1}^{L}|\mu_{\ell}|\right).
\end{equation}
With obvious notations, we conclude directly for terms $h^{(1)}_{\ell}$, using the fact that $L^1(\bR^n)$ is contained in $L^{\log}(\bR^n)$. So it is sufficient to prove that
$$\left\| \mathcal M \left(\sum_{\ell= 1}^{L}\mu_{\ell}g_{R_\ell} h^{(2)}_{\ell}\right)\right\|_{L^{\log}}\leq C\|g\|_{\BMO^+}\left(\sum_{\ell= 1}^{L}|\mu_{\ell}|\right).$$
At this point we proceed as in \cite{BIJZ}. We use the inequality
$$\mathcal M \left(\sum_{\ell=1}^{L}\mu_{\ell}g_{R_\ell} h^{(2)}_{\ell}\right)\leq \sum_{\ell=1}^{L}|\mu_{\ell}| |g_{R_\ell}| \mathcal M \left( h^{(2)}_{\ell}\right).$$
Then we write $g_{R_\ell}=g+(g_{R_\ell}-g)$. For the first term, that is,
$$|g|\left(\sum_{\ell=1}^{L}|\mu_{\ell}|  \mathcal M \left( h^{(2)}_{\ell}\right)\right),$$
we use the generalized H\"{o}lder inequality given in Proposition \ref{product_sharp}. Indeed, $g$ is in $\BMO(\bR^n)$ and
the function $\mathcal M (a)$, for $a$ an atom, is uniformly in $L^1$, so that $\sum_{\ell=1}^{L}|\mu_{\ell}|  \mathcal M \left( h^{(2)}_{\ell}\right)$ has norm in $L^1$ bounded by $C \sum_{\ell=1}^{L}|\mu_{\ell}|$.
To conclude for \eqref{cauchy}, it is sufficient to prove that
$$\left\|\sum_{\ell=1}^{L}|\mu_{\ell}||g-g_{R_\ell}| \mathcal M \left(h^{(2)}_{\ell}\right)\right\|_{L^1}\leq C \sum_{\ell=1}^{L}|\mu_{\ell}|. $$
This is a consequence of the following uniform inequality, valid for $g\in\BMO(\bR^n)$ and $a$ an atom adapted to the cube $R$:
$$\int_{\bR^n} |g-g_{R}| \mathcal M \left(a\right)dx\leq C\|g\|_{\BMO}.$$
To prove this inequality, by using invariance through dilation and translation, we may assume that $R$ is the cube $\mathbb Q$. We conclude by using the following classical lemma.
\begin{lemma}
Let $a$ be a classical atom related to the cube $\mathbb Q$ and $g$ be in $\BMO(\bR^n)$. Then
$$\int_{\bR^n} |g-g_{\mathbb Q}| \mathcal M \left(a\right)dx\leq C\|g\|_{\BMO}.$$
\end{lemma}
\begin{proof}
We cut the integral into two parts. By Schwarz Inequality and the boundedness of the operator $\mathcal M$ on $L^2(\bR^n)$, we have
\begin{eqnarray*}
\int_{|x|\leq 2} |g-g_{\mathbb Q}| \mathcal M \left(a\right)dx &\leq& C\left(\int_{2\mathbb Q} |g-g_{\mathbb Q}|^2dx \right)^{1/2}\|a\|_{L^2}\\
&\leq&
C \|g\|_{\BMO},
\end{eqnarray*}
here one used $|g_{2\mathbb Q}-g_{\mathbb Q}|\leq C\|g\|_{\BMO}$. Next, for $|x|>2$ we have the inequality
$$\mathcal M \left(a\right)(x)\leq \frac C{(1+|x|)^{n+1}},$$
and the classical inequality (see Stein's book \cite {St})
$$\int_{\bR^n} \frac{|g-g_{\mathbb Q}|}{(1+|x|)^{n+1}}dx\leq C\|g\|_{\BMO}.$$
We have proven \eqref{cauchy}.
\end{proof}

It remains to prove Lemma \ref{writing}, which we do now.

\begin{proof}[Proof of Lemma \ref{writing}]
Let $a$  be a $\psi$-atom which is related to the dyadic cube $R$. Let $j_0$ be such that $|R|=2^{-nj_0}$. We assume that $a$ has a finite wavelet expansion, so that $\Pi_2(a, g)$ is given by  \eqref{pile1expression} for some $j_1>j_0$.
As before, we can write
$\Pi_2(a, g) = a P_{j_0} g + \Pi_2(a, b)$, where $b$ is defined  by
$b:=\sum_{\lambda\in E}\sum_{I\in 2mR}\langle g, \psi_I ^{\lambda}\rangle \psi_I^{\lambda}$. It follows again from the characterization of $\BMO$ function through wavelets that the $L^2$ norm of $b$ is bounded by  $C\|g\|_{\BMO} |R|^{1/2}$. We use the $L^2$ estimate given by Lemma \ref{pi1L2} to bound uniformly the $\H^1$ norm of $\Pi_2(a,b)$. This term goes into  $h^{(1)}$.

 It remains to consider  $a P_{j_0} g$. By definition of $P_{j_0} g$, it can be written as $a \sum_I \langle g, \phi_I\rangle \phi_I$, where the sum in $I$ is extended to all dyadic cubes such that $|I|= 2^{-nj_0}$ and $mI\cap m R\neq \emptyset$. There are at most $(2m)^n$ such terms in this sum, and it is sufficient to prove that each of them can be written as $ h_1+\kappa |g_R| h_2$, with $h_2$ a classical atom related to $mQ$ and $h_1$ such that $\|h_1\|_{\H^1}\leq C   \|g\|_{\BMO}$. Let us first remark that for each of these $(2m)^n$ terms, the function $h:=|I|^{1/2} \phi_I a$ is (up to some uniform constant) a classical atom  related to $mR$: indeed, it has mean value $0$ because of the orthogonality of $\phi_I$ and $\psi_{I'}$ when $|I'|\leq |I|$ and the norm estimate follows at once. In order to conclude, it is sufficient to prove that $h_1=(g_R-|I|^{-1/2}\langle g, \phi_I\rangle) h$ has the required property. We conclude easily by showing that $g_R-|I|^{-1/2}\langle g, \phi_I\rangle$ is  bounded by $C\| g\|_{\BMO}$. But this difference may be written as $\langle \gamma, g\rangle$, where $\gamma:=|R|^{-1}\chi_R- |I|^{-1/2} \phi_I$. The function $\gamma$ has zero mean, is supported in $2m R$ and has $L^2$ norm bounded by $2|R|^{-1/2}$. Thus, up to multiplication by some uniform constant, it is a classical atom related to the cube $2m R$. It has an $\H^1$ norm that is uniformly bounded and its scalar product with $g$ is bounded by the $\BMO$ norm of $g$, up to a constant, as a consequence of the $\H^1-\BMO$ duality.

 This concludes for the proof.

\end{proof}
We have finished the proof of Theorem \ref{pi2}, and also of the one of Theorem \ref{main}. Just take $S=\Pi_3$.

\end{proof}
\section{Div-Curl Lemma}
The aim of this section is to prove Theorem \ref{div-curl}. The methods that we develop  are inspired by  the papers of Dobyinsky in the case of $L^2(\bR^n)$. They are generalized in a forthcoming paper of the last author \cite{Ky2}.

Let us first make some remarks. By using the decomposition of each product $F_j G_j$ into $S(F_j, G_j)+ T(F_j, G_j)$, we already know that all terms $T(F_j, G_j)$ are in $\H^{\log}(\bR^n)$. So we claim that it is sufficient to prove that $\sum_{j=1}^n S(F_j, G_j)$ is also in $\H^{\log}(\bR^n)$. We first assume that $F$ is in $\H^1(\bR^n, \bR^n)$ and $G$ in $\BMO(\bR^n, \bR^n)$. Since $F$ is $\curl$-free, we can assume that $F_j$ is a gradient, or, equivalently, $F_j=R_jf$, where $R_j$ is the $j$-th Riesz transform and $f=-\sum_{j=1}^n R_j(F_j)\in \H^1(\bR^n)$ since $\H^1(\bR^n)$ is invariant under Riesz transforms. Next, since $G$ is $\div$-free, we have the identity $\sum_{j=1}^n R_j G_j=0$. So it is sufficient to prove that $S(R_j f, G_j)+S(f, R_jG_j)$ is in $\H^{\log}(\bR^n)$ for each $j$. So Theorem \ref{div-curl} is a corollary of the following proposition.
\begin{proposition} Let $A$ be an odd Calder\'on-Zygmund operator. Then, the bilinear operator $ S(Af, g)+S(f,A g)$ maps continuously $\H^1(\bR^n)\times \BMO(\bR^n)$ into $\mathcal H^{1}(\mathbb R^n)$.
\end{proposition}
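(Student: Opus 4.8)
The plan is to reduce the statement to an $L^2\times L^2\to\H^1(\bR^n)$ estimate for the symmetrized operator and then transfer it to $\H^1\times\BMO$ by atoms, exactly along the lines of the proof of Theorem \ref{pi3}. The point is the renormalization phenomenon of Dobyinsky: although $\Pi_3=S$ alone only maps $L^2\times L^2$ into $L^1$ (Lemma \ref{pi3L2}), the combination $S(Af,g)+S(f,Ag)=\Pi_3(Af,g)+\Pi_3(f,Ag)$ acquires the extra cancellation needed to land in $\H^1$. So the first step is to prove
\[
\|\Pi_3(Af,g)+\Pi_3(f,Ag)\|_{\H^1}\le C\,\|f\|_{L^2}\,\|g\|_{L^2},
\]
with $C$ depending only on the Calder\'on--Zygmund constants of the odd operator $A$. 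Granting this, by bilinearity and the finite atomic decomposition \eqref{atomic} it suffices to bound $\|\Pi_3(Aa,g)+\Pi_3(a,Ag)\|_{\H^1}\le C\|g\|_{\BMO}$ for a single $\psi$-atom $a$ related to a dyadic cube $R$, and then to sum using $\sum_\ell|\mu_\ell|\le C\|f\|_{\H^1}$, a limiting argument from finite wavelet expansions extending the bound to all of $\H^1(\bR^n)$.

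The mechanism behind the $L^2$ estimate is the oddness of $A$, i.e. $A^*=-A$. Writing $h_j:=(Q_jAf)(Q_jg)+(Q_jf)(Q_jAg)$, each $h_j$ is a bump at scale $2^{-j}$, and the only obstruction to its membership in $\H^1$ is its mean. Since $Q_j$ is a self-adjoint projection and $A^*=-A$,
\[
\int_{\bR^n}h_j\,dx=\langle Af,Q_jg\rangle+\langle f,Q_jAg\rangle=\langle f,(Q_jA-AQ_j)g\rangle=\langle f,[Q_j,A]g\rangle .
\]
Because $\sum_{j}Q_j=I$, the commutators telescope, $\sum_j[Q_j,A]=A-A=0$, so $\sum_j h_j$ has vanishing integral; this is precisely the cancellation that is absent for a single term. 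Quantitatively, I would expand each factor in the wavelet basis and use that $A\psi_I^\lambda$ is again a well-localized molecule of mean zero (mean zero because $A1=0$ for an odd convolution kernel, and off-diagonal decay from the standard kernel estimates). This supplies both the cancellation at each scale and the off-diagonal decay, after which the $\H^1$ bound follows from the square-function characterization of $\H^1$ already used in Lemma \ref{pi1L2}, together with Schwarz' inequality and the maximal theorem.

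The main obstacle is that, unlike for $\Pi_1$ and $\Pi_3$ in the previous theorems, the operator $A$ destroys the compact support of the atom: $Aa$ is no longer supported near $R$, so one cannot simply replace $g$ by its local wavelet part $b=\sum_{\lambda}\sum_{I\subset 2mR}\langle g,\psi_I^\lambda\rangle\psi_I^\lambda$. I would circumvent this by passing to wavelet coefficients, using the identity $\langle Ag,\psi_I^\lambda\rangle=-\langle g,A\psi_I^\lambda\rangle$: the coupling of $g$ to the molecule $A\psi_I^\lambda$, whose $\dist(\cdot,I)$-decay is controlled by the Calder\'on--Zygmund estimates, together with the wavelet characterization of $\BMO$, lets one bound the relevant coefficients of $Ag$ near $R$ while the far contributions of $Aa$ are summed as a convergent tail. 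After this coefficientwise localization the pairs $(Aa,g)$ and $(a,Ag)$ behave like $L^2$ data with $\|a\|_{L^2}\le|R|^{-1/2}$ and local $g$-part of $L^2$ norm $\le C|R|^{1/2}\|g\|_{\BMO}$, whose product is $C\|g\|_{\BMO}$; the $L^2$ estimate of the first step then yields $\|\Pi_3(Aa,g)+\Pi_3(a,Ag)\|_{\H^1}\le C\|g\|_{\BMO}$, and summing over the atomic decomposition completes the proof.
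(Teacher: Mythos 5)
Your overall plan (symmetrize, prove an $L^2\times L^2\to\H^1$ bound, then transfer to $\H^1\times\BMO$ by atoms) is not the paper's route, but the more serious issue is that both of your key steps rest on cancellation arguments that do not work as stated. For the $L^2$ step, your commutator identity $\int h_j\,dx=\langle f,[Q_j,A]g\rangle$ is correct, but the telescoping $\sum_j[Q_j,A]=0$ only shows that the \emph{total} integral of $\sum_j h_j$ vanishes, and a mean-zero $L^1$ function need not lie in $\H^1$; moreover $h_j$ is not ``a bump at scale $2^{-j}$'' but a sum of such bumps spread over the whole grid, so even scale by scale the ``only obstruction is the mean'' picture fails. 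Likewise, mean-zero-ness of the molecules $A\psi_I^\lambda$ cannot by itself be the mechanism: if it were, $S(Af,g)$ alone would land in $\H^1$ and no symmetrization would be needed, whereas a single term only carries an $L^1$ bound. (The $L^2\times L^2\to\H^1$ statement itself is true -- it is essentially Dobyinsky's theorem -- but your argument does not prove it.) The cancellation that actually works, and this is the heart of the paper's proof, appears after reducing $S$ to its exactly diagonal part $S_0$ of \eqref{principal} (a reduction valid on all of $\H^1\times\BMO$ via the Frazier--Jawerth lemma, with no use of oddness): the antisymmetry $A^*=-A$ recombines the two terms into
\begin{equation*}
\sum_{\lambda,\lambda'}\sum_{I,I'}\langle f,\psi_I^{\lambda}\rangle\langle g,\psi_{I'}^{\lambda'}\rangle\langle A\psi_I^{\lambda},\psi_{I'}^{\lambda'}\rangle\left(|\psi_{I'}^{\lambda'}|^2-|\psi_{I}^{\lambda}|^2\right),
\end{equation*}
where each difference $|\psi_{I'}^{\lambda'}|^2-|\psi_{I}^{\lambda}|^2$ is mean zero but \emph{delocalized}, with $\H^1$ norm growing logarithmically in the scale ratio and in the separation $|x_I-x_{I'}|$; this logarithmic loss must then be beaten by the almost-diagonal decay $p_\delta(I,I')$ of the matrix of $A$, and one concludes by the boundedness of almost diagonal matrices on the space of sequences $(a_I)$ with $\bigl(\sum_I|a_I|^2|I|^{-1}\chi_I\bigr)^{1/2}\in L^1$. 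Your sketch never identifies this difference structure nor confronts the log-versus-$\delta$ bookkeeping, which is where the proof actually lives.

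The atomic transfer step has the same gap in sharper form. For a $\psi$-atom $a$ related to $R$, the coefficients $\langle a,\psi_I^\lambda\rangle$ vanish unless $I\subset R$, so in $S_0(Aa,g)+S_0(a,Ag)$ the terms of $S_0(Aa,g)$ with $I\not\subset R$ have no partner at the same $I$ inside $S_0(a,Ag)$; each such term $\langle Aa,\psi_I^\lambda\rangle\langle g,\psi_I^\lambda\rangle|\psi_I^\lambda|^2$ has nonzero integral and hence does not belong to $\H^1$ at all. Consequently your ``far contributions of $Aa$ summed as a convergent tail'' cannot be estimated in $\H^1$ by any absolute summation: those far terms must be paired, via $\langle Ag,\psi_{I'}^{\lambda'}\rangle=-\sum_{I,\lambda}\langle g,\psi_I^\lambda\rangle\langle A\psi_{I'}^{\lambda'},\psi_I^\lambda\rangle$, against the near terms so as to recreate exactly the differences displayed above. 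Once you carry out that pairing you are running the paper's argument anyway, at which point the preliminary $L^2$ theorem and the atomic decomposition become superfluous: the paper treats $f\in\H^1$ and $g\in\BMO$ directly through the wavelet characterizations of both spaces, with no localization of $g$ and no atoms in this proposition.
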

\begin{proof}
We make a first reduction, which is done by Dobyinsky in \cite{Do2}. When considering $S(f, g)$ on $\H^1(\bR^n)\times \BMO(\bR^n)$, we can write it as
$S(f,g)=h+S_0(f, g)$ with $h\in\H^1(\bR^n)$,
where
\begin{equation}\label{principal}
 S_0(f, g)=   \sum_{\lambda\in E}\sum_I \langle f, \psi_I^{\lambda}\rangle  \langle g, \psi_I^{\lambda}\rangle |\psi_I^\lambda|^2.
\end{equation}
Indeed, $S(f,g)-S_0(f,g)$ may be written in terms of products $\psi_I^{\lambda}\psi_{I'}^{\lambda'}$, with $|I|=|I'|$, $(I,\lambda)\ne (I',\lambda')$.  These functions are of mean $0$ because of the orthogonality of the wavelet basis, have $L^2$ norm  bounded, up to a constant, by $|I|^{-1/2}$, and are supported in $mI$. So they are $C$ times atoms of $\H^1(\bR^n)$. Recall that they are non zero only if $I'= k|I|^{1/n}+I$, with $k\in K$, where $K$ is the set of points with integer coordinates contained in $(-m, +m]^n$. So, to prove that $S(f,g)-S_0(f, g)$ is in $\H^1(\bR^n)$ it is sufficient to use the fact that, for fixed $\lambda, \lambda'$ and $k$,
$$\sum_{I} |\langle f, \psi_I^{\lambda}\rangle |\,| \langle g, \psi_{k|I|^{1/n}+I}^{\lambda'}\rangle |\leq C \|f\|_{\H^1}\|g\|_{\BMO}.$$
This is a consequence of the wavelet characterization of $f$ in $\H^1(\bR^n)$ and $g$ in $\BMO(\bR^n)$ and the following lemma, which may be found in \cite{FJ}.
\begin{lemma}
There exists a uniform constant $C$, such that, for  $(a_I)_{I\in\mathcal D}$ and $(b_I)_{I\in\mathcal D}$ two sequences that are indexed by the set $\mathcal D$ of dyadic cubes ,
one has the inequality
$$\sum_{I\in\mathcal D}|a_I||b_I|\leq C \left\|\left(\sum_{I\in\mathcal D} |a_I|^2 |I|^{-1}\chi_I\right)^{1/2}\right\|_{L^1} \times\sup_{R\in\mathcal D} \left(|R|^{-1}\sum_{I\subset R}|b_I|^2\right)^{1/2}.$$
\end{lemma}

Let us come back to the proof of the proposition. From this first step, we conclude that it is sufficient to prove that  $B(f, g):=S_0(Af, g)+S_0(f, Ag)$ is in $\H^1(\bR^n)$. Using bilinearity as well as the fact that $A^*=-A$, we have
$$B(f,g):=\sum_{\lambda\in E}\sum_{\lambda'\in E}\sum_{I,I'}\langle f, \psi_I^{\lambda}\rangle  \langle g, \psi_{I'}^{\lambda'}\rangle \langle A\psi_I^{\lambda}, \psi_{I'}^{\lambda'}\rangle (|\psi_{I'}^{\lambda'}|^2-|\psi_{I}^{\lambda}|^2).$$
From this point, the proof is standard.  An explicit computation gives that $|\psi_{I'}^{\lambda'}|^2-|\psi_{I}^{\lambda}|^2$ is in $\H^1(\bR^n)$, with
$$\||\psi_{I'}^{\lambda'}|^2-|\psi_{I}^{\lambda}|^2\|_{\H^1}\leq C\left(\log (2^{-j}+2^{-j'})^{-1}+\log(|x_I-x_{I'}|+2^{-j}+2^{-j'})\right).$$
Here $|I|=2^{-jn}$ and $|I'|=2^{-j'n}$, while $x_I$ and $x_{I'}$ denote the centers of the two cubes.  Next we use the well-known estimate of the matrix of a Calder\'on-Zygmund operator (see [MC, Proposition 1]): there exists some
$\delta\in (0,1]$, such that
$$|\langle A\psi_I^{\lambda}, \psi_{I'}^{\lambda'}\rangle|\leq C p_{\delta}(I, I')$$
with $$ p_{\delta}(I, I') = 2^{-|j-j'|(\delta+n/2)}\Big(\frac{2^{-j}+2^{-j'}}{2^{-j}+2^{-j'}+|x_I-x_{I'}|}\Big)^{n+\delta}.$$
So, by using the inequality
$$\log\Big(\frac{2^{-j}+2^{-j'}+|x_I-x_{I'}|}{2^{-j}+2^{-j'}}\Big)\leq \frac{2}{\delta}\Big(\frac{2^{-j}+2^{-j'}+|x_I-x_{I'}|}{2^{-j}+2^{-j'}}\Big)^{\delta/2},$$
we obtain
$$\|B(f,g)\|_{\H^1}\leq C \sum_{I,I'}|\langle f, \psi_I^{\lambda}\rangle|\,|  \langle g, \psi_I^{\lambda'}\rangle|p_{\delta'}(I, I')$$
where $\delta'= \delta/2>0$.
We conclude by using the fact that the almost diagonal matrix $p_{\delta'}(I, I')$ defines a bounded operator on the space  of all sequences $(a_I)_{I\in \mathcal D}$ such that $\Big(\sum_I |a_I|^2 |I|^{-1}\chi_I\Big)^{1/2}\in L^1(\mathbb R^n)$.

This is the end of the proof of Theorem \ref{div-curl} for $F\in \H^1(\bR^n, \bR^n)$ and $G\in \BMO(\bR^n, \bR^n)$ with $\curl F=0$ and $\div G=0$. Assume now that
$\div F=0$  and  $\curl G=0$. Similarly as above, we have $\sum_{j=1}^n R_j F_j=0$ and $G_j=R_j g$ where $g=-\sum_{j=1}^n R_j G_j\in \BMO(\bR^n)$ since $\BMO(\bR^n)$ is invariant under Riesz transforms. Hence,
$$F\cdot G=\sum_{j=1}^n (T(F_j, G_j)+ S(F_j, G_j))=\sum_{j=1}^n T(F_j, G_j)+ \sum_{j=1}^n (S(F_j, R_j g)+ S(R_j F_j, g)).$$
We conclude as before from the proposition.
\end{proof}

\section{Acknowledgements}
The authors are partially supported by the project ANR AHPI number ANR-07-BLAN-0247-01.

% Set the ending of a LaTeX document
\end{document}